\theoremstyle{plain}
\newtheorem{theorem}{Theorem}[section]
\newtheorem{corollary}[theorem]{Corollary}
\newtheorem{proposition}[theorem]{Proposition}
\newtheorem{lemma}[theorem]{Lemma}
\theoremstyle{definition}
\newtheorem{remark}[theorem]{Remark}
\newtheorem{definition}[theorem]{Definition}
 \DeclareMathOperator{\e}{e}
 \DeclareMathOperator{\dist}{dist\,}
\newcommand{\C}{\mathbb{C}}
\newcommand{\R}{\mathbb{R}}
\newcommand{\N}{\mathbb{N}}
\renewcommand{\leq}{\leqslant}
\renewcommand{\geq}{\geqslant}
\begin{document}

\begin{abstract}
This paper deals with the \emph{Bishop-Phelps-Bollob\'as property} (\emph{BPBp} for short) on bounded closed convex subsets of a Banach space $X$, not just on its closed unit ball $B_X$. We firstly prove that the \emph{BPBp} holds for bounded linear functionals on arbitrary bounded closed convex subsets of a real Banach space.  We show that for all finite dimensional Banach spaces $X$ and $Y$ the pair $(X,Y)$ has the \emph{BPBp} on every bounded closed convex subset $D$ of $X$, and also that for a Banach space $Y$ with property $(\beta )$ the pair $(X,Y)$ has the \emph{BPBp} on every bounded closed absolutely convex subset $D$ of an arbitrary Banach space $X$. For a bounded closed absorbing convex subset $D$ of $X$ with positive modulus convexity we get that the pair $(X,Y)$ has the \emph{BPBp} on $D$ for every Banach space $Y$. We further obtain that for an Asplund space $X$ and for a locally compact Hausdorff $L$, the pair $(X, C_0(L))$ has the \emph{BPBp} on every bounded closed absolutely convex subset $D$ of $X$. Finally we study the stability of the \emph{BPBp} on a bounded closed convex set for the $\ell_1$-sum or $\ell_{\infty}$-sum of a family of Banach spaces.
\end{abstract}

\title{The Bishop-Phelps-Bollob\'{a}s theorem on bounded closed convex sets}

\author[Cho]{Dong Hoon Cho}
\address[Cho]{Department of Mathematics, POSTECH, Pohang (790-784), Republic of Korea}
\email{\texttt{meimi200@postech.ac.kr}}

\author[Choi]{Yun Sung Choi}
\address[Choi]{Department of Mathematics, POSTECH, Pohang (790-784), Republic of Korea}
\email{\texttt{mathchoi@postech.ac.kr}}

\thanks{The first author is a corresponding author.}
\subjclass[2000]{}

\date{September 9th, 2014}

\maketitle

\section{Introduction}

 A remarkable result so called the \emph{Bishop-Phelps theorem} \cite{BishopPhelps} came out in 1961, which states that for every Banach space $X$, every linear functional on $X$ can be approximated by norm attaining ones. In fact, they showed a more general results: Let $D$ be a closed bounded convex subset of a real Banach space
 $X$. Then the set of support functionals of $D$ is a norm dense subset of its dual space $X^*$. In other words, the set of all elements of $X^*$ that attain their suprema on $D$ is a norm dense subset of $X^*$.
 However, Lomonosov \cite{Lomono} showed in 2000 that this statement cannot be extended to general complex spaces by constructing a closed bounded convex set with no support points. From now on, we assume that $X$ and $Y$ are real Banach spaces without any other comment.

 After a while, J. Lindenstrauss \cite{Lindens} studied in 1963 the denseness of norm attaining linear operators between Banach spaces, which has been a classical research topic in functional analysis since then. In particular, Bourgain \cite{Bou} obtained in 1976 such a surprising results that a Banach space $X$ has the \emph{Bishop-Phelps property} if and only if it has the Radon-Nikodym property(\emph{RNP} for short). We recall that a Banach space $X$ is said to have \emph{Bishop-Phelps property} if for every bounded closed and absolutely convex subset $D$ of $X$ and for every Banach space $Y$, the subset of $\mathcal{L}(X,Y)$ attaining their suprema in norm on $D$ is dense in the space $\mathcal{L}(X,Y)$, where $\mathcal{L}(X,Y)$ is the Banach space of bounded linear operators from $X$ into $Y$.

 In 1977 Stegall \cite{Stegall2} obtained a nonlinear form of Bourgain's result: Let $X$ be a Banach space with RNP, $D$ be a bounded closed convex subset of $X$ and $f:D\rightarrow \mathbb{R}$ be an upper semicontinuous bounded above function. Then for $\epsilon>0$, there exists $x^{*}\in X^{*}$ such that $\|x^{*}\|<\epsilon$ and $f+x^{*}$, $f+|x^{*}|$ strongly expose $D$. Applying this result to a vector-valued case, he showed the following. Let $X$ be a Banach space with \emph{RNP}, $D$ be a bounded closed convex subset of $X$, and $Y$ be a Banach space. Suppose that $\varphi : D\to Y$ is a uniformly bounded function such that the function $x\to \|\varphi(x)\|$ is upper semicontinuous. Then, for $\delta>0$, there exist $T:X\to Y$ a bounded linear operator of rank one, $\|T\|< \delta$ such that $\varphi +T$ attains its supremum in norm on $D$ and does so at most two points.

 We refer to \cite{Acosta-RACSAM} surveying most of recent results on the denseness of norm attaining linear or nonlinear mappings such as multilinear mappings, polynomials or holomorphic mappings.

 On the other hand Bollob\'{a}s \cite{Bollobas} sharpened in 1970 the Bishop-Phelps theorem by dealing simultaneously with norm attaining linear functionals and their norming points, which is stated as follows. We denote by $B_X$ and $S_X$ the closed unit ball and sphere of $X$, respectively.

\begin{theorem}\cite{Bollobas}
Let $X$ be a Banach space and $0<\epsilon <1$. Given $x \in S_X$ and $x^{*}\in S_{X^{*}}$ with $|1-x^{*}(x)|<\frac{\epsilon^{2}}{2}$,
there are elements $y \in S_X$ and $y^{*}\in S_{X^{*}}$ such that
$$
y^{*}(y)=1,~~~\|x-y\|<\epsilon,~~~and~~~\|y^{*}-x^{*}\|<\epsilon+\epsilon^2.
$$
\end{theorem}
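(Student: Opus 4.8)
The plan is to adapt the classical Bishop--Phelps argument, localising it around the given pair $(x,x^*)$ and tracking constants carefully enough to land inside the tolerances $\epsilon$ and $\epsilon+\epsilon^2$. The only use of the hypothesis is the one-sided bound $x^*(x)>1-\tfrac{\epsilon^2}{2}$. I set $k=\tfrac\epsilon2\in(0,\tfrac12)$ and introduce the Bishop--Phelps cone
$$K=\{z\in X:\ k\,\norm z\le x^*(z)\},$$
a closed convex cone whose interior is nonempty because $\norm{x^*}=1>k$, together with the partial order on $B_X$ given by $u\preceq v\iff v-u\in K$.

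First I would produce, by Zorn's lemma, a $\preceq$-maximal point $y\in B_X$ with $x\preceq y$. The order-theoretic input is that a $\preceq$-increasing chain $(u_\alpha)$ in $B_X$ is automatically Cauchy: the values $x^*(u_\alpha)$ increase and are bounded above by $1$, and $k\,\norm{u_\alpha-u_\beta}\le|x^*(u_\alpha)-x^*(u_\beta)|$, so the net converges in the closed set $B_X$, its limit being an upper bound since $K$ is closed. Maximality of the resulting $y$ means $(y+K)\cap B_X=\{y\}$, so in particular $B_X$ is disjoint from the open convex set $y+\interior K$. The proximity $\norm{y-x}<\epsilon$ is immediate from $k\,\norm{y-x}\le x^*(y-x)=x^*(y)-x^*(x)\le 1-x^*(x)<\tfrac{\epsilon^2}{2}$. (Alternatively $y$ could be extracted from Ekeland's variational principle applied to $x^*$ on $B_X$; the remaining steps are unchanged.)

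Next, since $B_X$ and $y+\interior K$ are disjoint convex sets with the latter open, I would separate them by a nonzero functional $y^*$, normalised so that $\norm{y^*}=1$. Because $0$ lies in the closure of $\interior K$, separation forces $y^*$ to attain its supremum over $B_X$ at $y$, so $y^*(y)=\norm{y^*}=1$, which in turn forces $\norm y=1$; thus $y\in S_X$, $y^*\in S_{X^*}$ and $y^*(y)=1$, and moreover $y^*\ge 0$ on $K$.

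The remaining and most delicate point is the estimate $\norm{y^*-x^*}<\epsilon+\epsilon^2$. Here I would identify the dual cone $\{\phi\in X^*:\ \phi\ge 0\ \text{on}\ K\}$ with $\{\lambda x^*+\psi:\ \lambda\ge 0,\ \norm\psi\le\lambda k\}$: one inclusion is a direct computation, and the other rests on the fact that this set is the cone generated by the weak*-compact convex set $x^*+kB_{X^*}$, and that the cone generated by a weak*-compact convex set missing the origin (here because $k<1=\norm{x^*}$) is weak*-closed — so any $\phi$ outside it can be strictly separated from it by an element of $X$, which necessarily lies in $K$, contradicting $\phi\ge 0$ on $K$. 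Applying this to $y^*$ yields $\lambda\ge 0$ with $\norm{y^*-\lambda x^*}\le\lambda k$; comparing norms gives $\tfrac1{1+k}\le\lambda\le\tfrac1{1-k}$, hence $|\lambda-1|\le\tfrac k{1-k}$ and
$$\norm{y^*-x^*}\le\norm{y^*-\lambda x^*}+|\lambda-1|\le\lambda k+\frac{k}{1-k}\le\frac{2k}{1-k}=\frac{2\epsilon}{2-\epsilon}<\epsilon+\epsilon^2,$$
the last inequality being equivalent to $\epsilon(1-\epsilon)>0$. The main obstacle, as flagged, is precisely this dual-cone step — the weak*-closedness and the resulting separation — together with the bookkeeping that keeps the final bound below $\epsilon+\epsilon^2$ rather than merely $O(\epsilon)$; everything else is routine.
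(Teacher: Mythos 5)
Your argument is correct, but it cannot be ``the same as the paper's'': the paper states this theorem as a quotation from Bollob\'as's 1970 note and gives no proof of it, so the comparison is necessarily with the classical proofs in the literature and with the paper's neighbouring machinery. What you have written is essentially Phelps's cone proof: the Bishop--Phelps cone $K=\{z: \tfrac{\epsilon}{2}\|z\|\le x^*(z)\}$, a $\preceq$-maximal point via Zorn (or Ekeland), separation of $B_X$ from $y+\interior{K}$ to produce $y^*$ with $y^*(y)=\|y^*\|=1$ and $y^*\ge 0$ on $K$, and then the dual-cone identification to compare $y^*$ with $x^*$. All the estimates check: $\|x-y\|<\tfrac{\epsilon^2}{2}/\tfrac{\epsilon}{2}=\epsilon$, and with $k=\tfrac{\epsilon}{2}$ the bound $\|y^*-x^*\|\le \tfrac{2k}{1-k}=\tfrac{2\epsilon}{2-\epsilon}<\epsilon+\epsilon^2$ reduces, after dividing by $\epsilon$, to $\epsilon(1-\epsilon)>0$, as you say. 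The one step that deserves to be made explicit is the weak$^*$-closedness of the cone generated by $x^*+kB_{X^*}$: a weak$^*$-convergent \emph{net} need not be bounded, so the naive limit argument fails and one should either invoke the Krein--\v{S}mulian theorem (the intersection of the cone with each ball $nB_{X^*}$ is the weak$^*$-compact image of $[0,n/(1-k)]\times(x^*+kB_{X^*})$ under scalar multiplication), or replace the whole dual-cone step by Phelps's elementary ``parallel hyperplane'' lemma, which gives $\|x^*-y^*\|\le 2k$ or $\|x^*+y^*\|\le 2k$ directly, the second alternative being excluded because $x^*(y)>0=1-y^*(y)$. By contrast, the route this paper actually develops (Theorem 2.2 and its Corollary, via Ekeland's variational principle) perturbs the \emph{functional} by a small $x^*$ of norm $\le\epsilon/2$ and renormalizes; that argument is shorter and yields the slightly cleaner constants $(\epsilon^2/4,\epsilon,\epsilon)$, whereas your cone argument reproduces exactly Bollob\'as's original constants $(\epsilon^2/2,\epsilon,\epsilon+\epsilon^2)$. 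Both are valid; yours is self-contained and matches the statement verbatim.
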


He also showed that this theorem is best possible in the following sense. For any $0<\epsilon <1$ there exist a Banach space $X$, point $x\in S_X$ and functional $f\in S_{X^*}$ such that $f(x)= 1- (\epsilon^2/2)$, but if $y\in S_X$, $g\in S_{X^*}$ and $g(y)=1$, then either $\|f-g\|\geq \epsilon$ or $\|x-y\|\geq \epsilon$.

Since this theorem of Bollob\'{a}s is stated explicitly, we have referred it more often than the theorem of Br$\o$nsted and Rockafellar \cite{BR}, a more general and earlier result than Bollob\'{a}s. Using the concept of the subdifferential of a convex function it is written as follows: Suppose that $f$ is a convex proper lower semicontinuous function on a Banach space $X$. Then given any point $x_0 \in dom(f)$, $\epsilon>0$, $\lambda>0$ and any $x_0^*\in \partial_{\epsilon}f(x_0)$, there exist $x\in dom(f)$ and $x^*\in X^*$ such that $$x^*\in \partial(f),~ \|x-x_0\|\leq \frac{\epsilon}{\lambda},~\mbox{and},~\|x^*-x_0^*\|\leq \lambda.$$ In particular, the domain of $\partial f$ is dense in $dom(f)$.

Acosta et al. \cite{AAGM2} introduced in 2008 the following definition to study this property for linear operators between Banach spaces.

\begin{definition}\cite{AAGM2}
A pair of Banach spaces $(X,Y)$ is said to have the \emph{Bishop-Phelps-Bollob\'{a}s property} (\emph{BPBp} for short) if for every $\epsilon >0$ there are $0<\eta(\epsilon)<1$ and
$\beta(\epsilon)>0$ with $\lim_{\epsilon\rightarrow 0}\beta(\epsilon)=0$ such that for all $T\in S_{\mathcal{L}(X,Y)}$ and $x_0 \in S_X$ satisfying
$\|T(x_0)\|>1-\eta(\epsilon)$, there exist a point $u\in S_X$ and an operator $S\in S_{\mathcal{L}(X,Y)}$ that satisfy the following conditions:
$$
\|Su_0\|=1,~~~\|u_0 -x_0\|<\beta(\epsilon),~~~and~~~\|T-S\|<\epsilon.
$$
\end{definition}

Since they characterized  in \cite{AAGM2} the Banach
space $Y$ for which the \emph{BPBp} holds for operators from $\ell_1$  into $Y$, lots of interest has been caused in this property (for instance see \cite{ACKLM, ADM, ACK, ABGM, ACGM-Adv, DLM, Kim-c_0, KimLee}).

 We note that the \emph{BPBp} is not so closely related with \emph{RNP} as the \emph{Bishop-Phelps-Bollob\'{a}s property}. For example, $\ell_1$ has \emph{RNP}, but there exists a Banach space $Y$ such that the pair $(\ell_1,Y)$ does not have the \emph{BPBp} (\cite{AAGM2}). On the other hand, the pair $(L_1[0,1], L_{\infty}[0,1])$ has the \emph{BPBp} (\cite{ACGM-Adv}), but $L_1[0,1]$ does not have \emph{RNP}.

So far, the \emph{BPBp} has been studied on the closed unit ball $B_X$, but in this paper we deal with this property on bounded closed convex subsets $D$ of a Banach space $X$, not just on $B_X$. We introduce the following more general definition. Let $$\|T\|_D =\sup\{ \|Tx\|:x\in D\}$$ for $T\in \mathcal{L}(X,Y)$.

\begin{definition}
Let $X$ and $Y$ be Banach spaces. Let $D$ be a bounded closed convex subset of $X$.
We say that $(X,Y)$ has the \emph{Bishop-Phelps-Bollob\'{a}s property} on $D$ (\emph{BPBp} on $D$ for short) if for every $\epsilon>0$, there is $\eta_{D}(\epsilon)>0$ such that for every $T\in L(X,Y),~ \|T\|_D =1$ and every $x \in D$ satisfying
$$
\|T(x)\|> 1-\eta_{D}(\epsilon),
$$
there exist $S\in L(X,Y)$ and $z\in D$ such that
$$
\|S(z)\|=1=\|S\|_D,~~~\|x-z\|<\epsilon~~~and~~~\|T-S\|<\epsilon.
$$
Similarly we say that $(X,Y)$ has the \emph{Bishop-Phelps property} on $D$ (\emph{BPp} on $D$ for short) if for every $\epsilon>0$ and for every $T\in \mathcal{L}(X,Y)$ with $\|T\|_D =1$, then there exist $S \in \mathcal{L}(X,Y)$ and $z \in D$ such that
$$
\|S(z)\|=1=\|S\|_D~~~and~~~\|T-S\|<\epsilon.
$$
\end{definition}

In general, we cannot expect the same results in the \emph{BPBp} on a closed bounded convex set $D$ as those on $B_X$. For a uniformly convex space $X$ the pair $(X,Y)$ has the \emph{BPBp} on $B_X$ for every Banach space $Y$ (\cite{ABGM, KimLee}). However, there is a Banach space $Y$ such that $(\ell_2^2, Y)$ fails to have the \emph{BPBp} on $D=B_{\ell_1^2}$, even though $\ell_2^2$ is a uniformly convex space of dimension 2. We can actually show this fact by just considering the bounded operators $T_k$ defined on $\ell_1^2$ in \cite[Example 4.1]{ACKLM} as those on $\ell_2^2$. In fact, for $k\in \mathbb{N}$, consider $Y_{k}=\mathbb{R}^{2}$ with the norm
$$
\|(x,y)\|=\max\left\{|x|,|y|+\frac{1}{k}|x|\right\},
$$
and $\mathcal{Y}=[\bigoplus^{\infty}_{k=1}Y_{k}]_{\ell_{\infty}}$. Define $T_{k}\in \mathcal{L}(\ell^{2}_{2},Y_{k})$ by
$$
T_{k}(e_1)=\left(-1,1-\frac{1}{k}\right)~~~~and~~~~T_{k}(e_2)=\left(1,1-\frac{1}{k}\right).
$$
Since $B_{\ell^{2}_{1}}=D \subset B_{\ell^{2}_{2}}$, we have $\|T_{k}\|_{D}=1$ for all $k\in \mathbb{N}$.
It follows from the same argument as in \cite[Example 4.1]{ACKLM} with Theorem \ref{beta} and Proposition \ref{ell-infty}  that $(\ell_2^2, Y)$ fails to have the \emph{BPBp} on $D=B_{\ell_1^2}$.

In Section $2$, we show that the \emph{BPBp} holds for bounded linear functionals on arbitrary bounded closed convex sets. Using this, we sharpen Stegall's optimization principles \cite{Stegall2} in the sense of the \emph{BPBp}.  In Section $3$, we show that for all finite dimensional Banach spaces $X$ and $Y$ the pair $(X,Y)$ has the \emph{BPBp} on every bounded closed convex subset $D$ of $X$, and also that for a Banach space $Y$ with property $(\beta )$ the pair $(X,Y)$ has the \emph{BPBp} on an every bounded closed absolutely convex subset $D$ of an arbitrary Banach space $X$.  For a bounded closed convex subset $D$ of $X$ with positive modulus convexity we get that the pair $(X,Y)$ has the \emph{BPBp} on $D$ for every Banach space $Y$. We further prove that for an Asplund space $X$ and for a locally compact Hausdorff $L$, the pair $(X, C_0(L))$ has the \emph{BPBp} on every bounded closed absolutely convex subset $D$ of $X$. In Section $4$, we study the stability of the \emph{BPBp} for the $\ell_1$-sum or $\ell_{\infty}$-sum of a family of Banach spaces.

\section{Linear functionals attaining their suprema on bounded closed convex sets}

We begin by recalling Ekeland's variational principle \cite{FHHMZ, eke}, which can be stated as follows:
\begin{theorem}[Ekeland]
Let $f : X\to \R\cup\{\infty\}$ be a proper lower semicontinuous and bounded below function on a Banach space $X$. Then given $\epsilon >0$ and $\delta>0$, there exists $x_1 \in X$ such that
$f(x_1)< f(x)+ \epsilon \|x-x_1\|$ for every $x \in X$ with $x\neq x_1$. Moreover if $f(x_0)< b + \frac{\delta}{2}$, where $b= inf\{f(x)~:~ x\in X\}$, then $x_1$ can be chosen so that $\|x_0-x_1\| < \frac{\delta}{\epsilon}.$
\end{theorem}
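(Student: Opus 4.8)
The plan is to run the classical order-theoretic argument for Ekeland's principle. On $X$ introduce the relation $x \preceq y \iff f(x) + \epsilon\|x - y\| \leq f(y)$; the triangle inequality makes $\preceq$ a partial order (antisymmetry uses $\epsilon > 0$), and for each $y$ the set $S(y) := \{x \in X : x \preceq y\}$ is closed, since $f$ is lower semicontinuous and $x \mapsto \|x-y\|$ is continuous. Note that a $\preceq$-minimal element $x_1$ is exactly what the first assertion asks for: minimality says that no $x \neq x_1$ satisfies $f(x) + \epsilon\|x-x_1\| \leq f(x_1)$, that is, $f(x_1) < f(x) + \epsilon\|x-x_1\|$ for all $x \neq x_1$.

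So the real work is to produce a $\preceq$-minimal element lying below a prescribed point. Starting from any $x_0$ with $f(x_0) < \infty$ (such a point exists since $f$ is proper), build a sequence recursively: given $x_n$, choose $x_{n+1} \in S(x_n)$ with $f(x_{n+1}) \leq \inf_{S(x_n)} f + 2^{-n}$; this infimum is finite because $f$ is bounded below, and $S(x_n)$ is nonempty since $x_n \in S(x_n)$. By transitivity, $x_{n+1} \preceq x_n$ gives $S(x_{n+1}) \subseteq S(x_n)$, so the closed sets $S(x_n)$ decrease. The crucial estimate is that for $x \in S(x_{n+1})$ we have $\epsilon\|x - x_{n+1}\| \leq f(x_{n+1}) - f(x) \leq f(x_{n+1}) - \inf_{S(x_n)} f \leq 2^{-n}$, hence $\operatorname{diam} S(x_{n+1}) \leq 2^{1-n}/\epsilon \to 0$. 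By completeness of $X$ and Cantor's intersection theorem, $\bigcap_n S(x_n)$ consists of a single point $x_1$, which lies in $S(x_0)$, so $x_1 \preceq x_0$. Finally $x_1$ is $\preceq$-minimal: if $x \preceq x_1$, then $x \preceq x_n$ for every $n$, whence $x \in \bigcap_n S(x_n) = \{x_1\}$. This settles the first assertion.

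For the ``moreover'' statement, run the construction with the given $x_0$, which satisfies $f(x_0) < b + \frac{\delta}{2}$ where $b = \inf_X f$. From $x_1 \preceq x_0$ we get $\epsilon\|x_1 - x_0\| \leq f(x_0) - f(x_1) \leq f(x_0) - b < \frac{\delta}{2}$, so $\|x_0 - x_1\| < \frac{\delta}{2\epsilon} < \frac{\delta}{\epsilon}$, as claimed.

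I do not anticipate a genuine obstacle: the whole argument is standard once one hits on the partial order $\preceq$. The only points that need a little care are the quantitative diameter bound — it is what upgrades ``$\bigcap_n S(x_n) \neq \emptyset$'' to ``$\bigcap_n S(x_n)$ is a singleton'', which is needed to pin down $x_1$ uniquely — and the use of transitivity of $\preceq$ when checking that this limit point is actually $\preceq$-minimal. (One could instead obtain the minimal element by a Zorn's lemma argument on $S(x_0)$, verifying that each chain has a lower bound via the same Cauchy estimate, but the explicit nested construction above is more self-contained and directly yields the distance bound.)
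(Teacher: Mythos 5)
Your proof is correct: it is the classical order-theoretic argument (the relation $x\preceq y \iff f(x)+\epsilon\|x-y\|\leq f(y)$, nested closed sections $S(x_n)$ with diameters tending to $0$, Cantor intersection), and the quantitative estimate $\epsilon\|x_1-x_0\|\leq f(x_0)-f(x_1)\leq f(x_0)-b<\delta/2$ even gives the slightly stronger bound $\|x_0-x_1\|<\delta/(2\epsilon)$. The paper itself supplies no proof of this statement --- it merely recalls Ekeland's principle from \cite{FHHMZ, eke} --- and your argument is essentially the one found in those references, so there is nothing to reconcile; the only cosmetic caveat is that $\preceq$ is genuinely a partial order only on $\mathrm{dom}(f)$ (antisymmetry can fail where $f=\infty$), which is harmless since your whole construction takes place inside $S(x_0)\subseteq\mathrm{dom}(f)$.
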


The proof of Theorem 7.41 in \cite{FHHMZ} actually gives the following:

\begin{theorem}\label{first}
Let $D$ be a bounded convex closed subset of a Banach space $X$. Given $\epsilon >0$ and $\delta>0$, if $f \in X^{*}$ and $x_0 \in D$ such that
$$
f(x_0)>sup\{ f(x):x \in D\}-\frac{\delta}{2},
$$
then there exist $g \in X^{*}$ and $x_1 \in D$ satisfying
$$
g(x_1)=sup\{g(x):x\in D\},~~\|f-g\| \leq \epsilon ~~and ~~\|x_1-x_0\| \leq \frac{\delta}{\epsilon}.
$$
\end{theorem}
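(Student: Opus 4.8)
The plan is to deduce the statement from Ekeland's variational principle by the usual indicator-function trick, and then to produce the supporting functional $g$ by a Hahn--Banach separation.

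First I would put $\phi = -f + \iota_D$ on $X$, where $\iota_D$ is the indicator function of $D$ (equal to $0$ on $D$ and to $+\infty$ elsewhere). Since $x_0 \in D$ the set $D$ is nonempty, and as $D$ is closed and bounded while $f\in X^*$, the function $\phi$ is proper, lower semicontinuous and bounded below, with $\inf_X\phi = -\sup\{f(x):x\in D\}=:b$. The hypothesis $f(x_0)>\sup\{f(x):x\in D\}-\delta/2$ is exactly $\phi(x_0)<b+\delta/2$. Applying Ekeland's theorem to $\phi$ with the given $\epsilon$ and $\delta$ yields $x_1\in X$ with $\phi(x_1)<\phi(x)+\epsilon\|x-x_1\|$ for all $x\ne x_1$ and $\|x_0-x_1\|<\delta/\epsilon$. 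Because $\phi(x_1)$ is finite, $x_1\in D$, and rewriting the Ekeland inequality on $D$ gives $f(x)-f(x_1)\le\epsilon\|x-x_1\|$ for every $x\in D$.

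Next I would tilt $f$ so that it attains its supremum on $D$ at $x_1$. In $X\times\R$ consider the closed convex cone $K=\{(v,r): r\ge\epsilon\|v\|\}$ and the convex set $L=\{(x-x_1,\,f(x)-f(x_1)):x\in D\}$, the affine image of $D$. The Ekeland estimate says precisely that $L$ misses the interior $\{(v,r):r>\epsilon\|v\|\}$ of $K$, which is nonempty; hence by the geometric Hahn--Banach theorem there is a nonzero $(\xi,\lambda)\in X^*\times\R$ with $\sup_{(v,r)\in L}(\xi(v)+\lambda r)\le\inf_{(v,r)\in K}(\xi(v)+\lambda r)$, and since $(0,0)$ lies in both $L$ and $K$, both of these numbers equal $0$. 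Testing the infimum against the rays $\{(0,t):t\ge0\}\subseteq K$ forces $\lambda\ge0$, and $\lambda=0$ would give $\inf_{v\in X}\xi(v)=0$, hence $\xi=0$, contradicting $(\xi,\lambda)\ne0$; so $\lambda>0$ and we may normalize $\lambda=1$. Then evaluating $\inf_{K}(\xi(v)+r)=0$ at the points $(\pm v,\epsilon\|v\|)\in K$ gives $|\xi(v)|\le\epsilon\|v\|$, i.e. $\|\xi\|\le\epsilon$, while $\sup_{L}(\xi(v)+r)=0$ reads $(f+\xi)(x)\le(f+\xi)(x_1)$ for all $x\in D$. Setting $g=f+\xi$ then finishes the argument: $\|f-g\|=\|\xi\|\le\epsilon$, the supremum $\sup\{g(x):x\in D\}$ is attained at $x_1$, and $\|x_1-x_0\|<\delta/\epsilon$.

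The one point that needs care is the separation step: one must observe that the cone $K$ has nonempty interior disjoint from $L$ (this disjointness is exactly the content of the Ekeland conclusion) and that the separating hyperplane is not vertical, i.e. $\lambda\ne0$ --- without this one separates only in the $X$-variable and gets no control on $\|\xi\|$. Both are dealt with as above, and the remaining verifications are routine. (Equivalently, one can phrase the tilt via the Moreau--Rockafellar sum rule: the convex function $\epsilon\|\cdot-x_1\|+(-f+\iota_D)$ attains its minimum over $X$ at $x_1$, so $0\in\epsilon B_{X^*}+\bigl(-f+N_D(x_1)\bigr)$, which yields the same $g$; but the separation argument is self-contained.)
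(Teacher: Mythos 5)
Your proof is correct, and it is essentially the argument the paper is invoking: the paper gives no proof of its own but derives the statement from the proof of Theorem 7.41 in \cite{FHHMZ}, which is exactly this combination of Ekeland's variational principle applied to $-f+\iota_D$ followed by a separation (subdifferential) argument to tilt $f$ into a supporting functional $g=f+\xi$ with $\|\xi\|\leq\epsilon$. The details you supply, including the verification that the separating hyperplane is not vertical, are sound.
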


 It is trivial  that Theorem \ref{first} is not true any more for a complex Banach space (\cite{Lomono}).

\begin{corollary}
Let $\epsilon >0$ be given. If $f \in S_{X^{*}}$ and $x_0 \in S_X$ satisfy that
$$
|1-f(x_0)| < \frac{\epsilon^2}{4},
$$
then there exist $g\in S_{X^{*}}$ and $z\in S_X$ such that
$$
g(z)=1,~~\|f-g\|\leq\epsilon ~~and~~\|x_0 -z\| \leq \epsilon.
$$
\end{corollary}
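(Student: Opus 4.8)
The plan is to derive this from Theorem~\ref{first} applied to the closed unit ball $D=B_X$, followed by a routine normalization. The first observation is that $\sup\{f(x):x\in B_X\}=\|f\|=1$ because $f\in S_{X^*}$, so the hypothesis $|1-f(x_0)|<\epsilon^2/4$ is exactly the statement $f(x_0)>1-\epsilon^2/4=\sup\{f(x):x\in B_X\}-\tfrac12\cdot\tfrac{\epsilon^2}{2}$. The case $\epsilon\geq 2$ requires no work: take $z=x_0$ and, by Hahn--Banach, $g\in S_{X^*}$ with $g(x_0)=1$, which gives $\|f-g\|\leq 2\leq\epsilon$ and $\|x_0-z\|=0$. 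So from now on I would assume $0<\epsilon<2$.

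I would then apply Theorem~\ref{first} to $D=B_X$ with the roles of its parameters ``$\epsilon$'' and ``$\delta$'' played by $\epsilon/2$ and $\epsilon^2/2$ respectively. The inequality just verified is precisely the hypothesis of that theorem, so it produces $g\in X^*$ and $x_1\in B_X$ with $g(x_1)=\sup\{g(x):x\in B_X\}=\|g\|$, $\|f-g\|\leq\epsilon/2$, and $\|x_1-x_0\|\leq(\epsilon^2/2)\big/(\epsilon/2)=\epsilon$. Since $\|f-g\|\leq\epsilon/2<1$ the functional $g$ is nonzero, and $g(x_1)=\|g\|>0$ together with $x_1\in B_X$ forces $\|x_1\|=1$. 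Hence $z:=x_1$ lies in $S_X$, and $\tilde g:=g/\|g\|$ satisfies $\tilde g\in S_{X^*}$ and $\tilde g(z)=1$.

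The last step is the norm estimate for $f-\tilde g$, and this is the only point where any thought is needed: it is exactly to absorb the error introduced by normalizing $g$ that the tolerance $\epsilon/2$ was used above rather than $\epsilon$. By the triangle inequality $\|f-\tilde g\|\leq\|f-g\|+\|g-\tilde g\|$, while $\|g-\tilde g\|=\big|\,\|g\|-1\,\big|=\big|\,\|g\|-\|f\|\,\big|\leq\|g-f\|\leq\epsilon/2$; adding the two bounds gives $\|f-\tilde g\|\leq\epsilon$. Renaming $\tilde g$ as $g$, the pair $(g,z)$ meets all three requirements $g(z)=1$, $\|f-g\|\leq\epsilon$, $\|x_0-z\|\leq\epsilon$. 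I do not foresee any genuine obstacle beyond this bookkeeping; the substantive content lies entirely in Theorem~\ref{first}, hence in Ekeland's variational principle.
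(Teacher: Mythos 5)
Your proposal is correct and follows essentially the same route as the paper: apply Theorem~\ref{first} on $D=B_X$ with parameters $\epsilon/2$ and $\delta=\epsilon^2/2$, then normalize the perturbed functional and absorb the normalization error $\bigl|\,\|g\|-1\,\bigr|\leq\|f-g\|\leq\epsilon/2$ into the remaining half of the tolerance. The only differences are cosmetic: you explicitly justify that the maximizing point lies on $S_X$ and dispose of the trivial case $\epsilon\geq 2$, both of which the paper leaves implicit.
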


\begin{proof}
Apply Theorem \ref{first} with $\delta= \frac{\epsilon^2}{2}$ and $\epsilon' = \frac{\epsilon}{2}$  and we can choose $\|x^{*}\|\leq \frac{1}{2}\epsilon$ and $z\in S_X$ so that $f+x^{*}$ attains its norm at $z$. Set $g=(f+x^{*})/\|f+x^{*}\|$. Then
\begin{eqnarray*}
\|f-g\|&\leq & \|f-(f+x^{*})\|+\|(f+x^{*})-g \| \leq \frac{1}{2}\epsilon + \|f+x^{*}\|\cdot\left|1-\frac{1}{\|f+x^{*}\|}\right|\\
&\leq & \frac{1}{2}\epsilon + \left| \|f+x^{*}\|-1\right| \leq\frac{1}{2}\epsilon+\frac{1}{2}\epsilon=\epsilon.
\end{eqnarray*}
Also we get $|g(z)|=1$ and $\|x_0-z\|\leq \frac{\epsilon^2}{2}/ \frac{\epsilon}{2} = \epsilon$.
\end{proof}

We can also obtain the following theorem for a bounded linear functional, which is analogous to Stegall's  nonlinear form \cite{Stegall2} of Bourgain's result mentioned in Introduction.

\begin{theorem}\label{pre}
Let $D$ be a bounded closed convex set in a Banach space $X$.
Given $0< \epsilon< 1/4$ and $f \in X^{*}$, there exist $x^{*} \in X^{*}$ and $x_0 \in D$ such that
both $f+x^{*}$ and $f+|x^{*}|$ attain their suprema simultaneously at $x_0$ and $\|x^{*}\|\leq\epsilon$. Moreover $(f+x^{*})(x_0)=(f+|x^{*}|)(x_0)$.
\end{theorem}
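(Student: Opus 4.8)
The plan is to reduce the statement to a single inequality relating $\sup_{x\in D}(f+x^{*})(x)$ and $\sup_{x\in D}(f-x^{*})(x)$, and then to produce such an $x^{*}$ by applying Theorem~\ref{first} at most twice. Throughout, write $\sigma(g):=\sup_{x\in D}g(x)$ for $g\in X^{*}$; since $D$ is bounded these suprema are finite.

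First I would record the reduction. For any $x^{*}\in X^{*}$ the identity $f(x)+|x^{*}(x)|=\max\{(f+x^{*})(x),(f-x^{*})(x)\}$ holds pointwise on $D$, whence $\sup_{x\in D}\bigl(f(x)+|x^{*}(x)|\bigr)\le\max\{\sigma(f+x^{*}),\sigma(f-x^{*})\}$. Consequently it is enough to find $x^{*}\in X^{*}$ with $\|x^{*}\|\le\epsilon$ and $x_{0}\in D$ such that
\[
(f+x^{*})(x_{0})=\sigma(f+x^{*})\qquad\text{and}\qquad \sigma(f-x^{*})\le\sigma(f+x^{*}).
\]
Indeed these two facts give
\[
\sup_{x\in D}(f+|x^{*}|)(x)\ \le\ (f+x^{*})(x_{0})\ \le\ (f+|x^{*}|)(x_{0})\ \le\ \sup_{x\in D}(f+|x^{*}|)(x),
\]
so all four quantities coincide; hence $f+|x^{*}|$ attains its supremum on $D$ at $x_{0}$, and the equality $(f+x^{*})(x_{0})=(f+|x^{*}|)(x_{0})$ forces $x^{*}(x_{0})\ge 0$. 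This is exactly the conclusion sought.

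To produce such an $x^{*}$, first apply Theorem~\ref{first} to $f$ (with $\epsilon/4$ in place of $\epsilon$ and any $\delta>0$, after choosing a point of $D$ on which $f$ is within $\delta/2$ of $\sigma(f)$): this yields $u^{*}\in X^{*}$ with $\|u^{*}\|\le\epsilon/4$ and $p\in D$ at which $f+u^{*}$ attains its supremum on $D$. If $\sigma(f-u^{*})\le\sigma(f+u^{*})$, the reduction finishes the proof with $x^{*}=u^{*}$ and $x_{0}=p$. Otherwise put $\gamma:=\sigma(f-u^{*})-\sigma(f+u^{*})>0$ and $M:=\sup_{x\in D}\|x\|$; note $M>0$, since $\gamma>0$ forces $u^{*}(x)\neq 0$ for some $x\in D$. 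Apply Theorem~\ref{first} a second time, to $f-u^{*}$, with $\epsilon'':=\min\{\epsilon/4,\gamma/(3M)\}$ in place of $\epsilon$: this gives $w^{*}\in X^{*}$ with $\|w^{*}\|\le\epsilon''$ and $q\in D$ at which $(f-u^{*})+w^{*}$ attains its supremum on $D$. Set $x^{*}:=w^{*}-u^{*}$ and $x_{0}:=q$; then $\|x^{*}\|\le\epsilon/2<\epsilon$ and $f+x^{*}=(f-u^{*})+w^{*}$ attains its supremum on $D$ at $q$. Since $|w^{*}(x)|\le\|w^{*}\|M\le\epsilon''M$ on $D$, we get $\sigma(f+x^{*})\ge\sigma(f-u^{*})-\epsilon''M$ and $\sigma(f-x^{*})\le\sigma(f+u^{*})+\epsilon''M$, so
\[
\sigma(f+x^{*})-\sigma(f-x^{*})\ \ge\ \gamma-2\epsilon''M\ \ge\ \frac{\gamma}{3}\ >\ 0 ,
\]
and the reduction again applies, completing the proof.

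The one substantive step is this second, corrective perturbation. A single use of Theorem~\ref{first} makes $f+x^{*}$ attain its supremum but gives no control over which of $\sigma(f\pm x^{*})$ is the larger, whereas exactly that control is what lets one pull $f+|x^{*}|=\max(f+x^{*},f-x^{*})$ down to the same attaining point. The device is to measure the imbalance $\gamma$ first and only then choose the corrective term $w^{*}$ small relative to $\gamma/M$, so that it cannot reverse the now-favourable imbalance; since the first perturbation was already tiny, there is no circularity. (The hypothesis $\epsilon<1/4$ is not needed for this argument.)
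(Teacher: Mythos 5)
Your proof is correct and follows essentially the same strategy as the paper's: a first application of the Bishop--Phelps/Ekeland perturbation, then a second corrective perturbation of $f$ minus the first one, calibrated to the measured imbalance between $\sup_{D}(f+x^{*})$ and $\sup_{D}(f-x^{*})$ so that it cannot reverse the favourable inequality. Your verification via the pointwise identity $f+|x^{*}|=\max(f+x^{*},\,f-x^{*})$ is a slightly cleaner repackaging of the paper's concluding contradiction argument, but the underlying mechanism is identical.
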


\begin{proof}
We may assume $D \subset B_X$ and $\|f\|_D=1$.
By the Bishop-Phelps theorem, there exists $x^{*}\in X^{*}$ such that $f+x^{*}$ attains its supremum at $x_0 \in D$ and $\|x^{*}\|\leq \frac{\epsilon}{2}.$
If $f(x_0)+x^{*}(x_0)\geq f(x)+|x^{*}(x)|$ for every $x\in D$, we are done.
Otherwise, there exists $y \in D$ such that $f(y)+|x^{*}(y)|>f(x_0)+x^{*}(x_0)$. Clearly $x^{*}(y)<0$, and
$$
f(y)-x^{*}(y) > f(x_0)+x^{*}(x_0).
$$
Let $s=\sup_{x\in D}\{f(x)-x^{*}(x)\}$ and $\alpha=s-(f(x_0)+x^{*}(x_0))< (1+ \frac{\epsilon}{2})-(1-\frac{\epsilon}{2})= \epsilon.$
Choose $y_0\in D$ so that $f(y_0)-x^{*}(y_0)>s-\frac{\alpha^{2}\epsilon^{2}}{2}$. By Theorem \ref{first}, there exists $x^{*}_{1}$ such that
$(f-x^{*})+x^{*}_{1}$ attains its supremum at $z_0 \in D$, $\|x^{*}_{1}\|\leq\alpha\epsilon$ and $\|y_0-z_0\|\leq \alpha\epsilon.$
Then,

\begin{eqnarray*}
f(z_0)-x^{*}(z_0)+x^{*}_{1}(z_0) &\geq&
f(y_0)-x^{*}(y_0)-\|x_1^*\|~\|z_0 -y_0\|+ x^{*}_{1}(z_0)
\\[5pt]
&>& s-\frac{3\alpha^{2}\epsilon^{2}}{2}-\alpha\epsilon \\[5pt]
&>& f(x_0)+x^{*}(x_0)+\alpha\epsilon,
\end{eqnarray*}
where the last inequality follows from the definition of $\alpha$ and $0<\alpha < \epsilon <1/4$.
Set $x^{*}_{2}=-x^{*}+x^{*}_{1}$. Clearly, $\|x_2^*\|\leq \epsilon$. We can see that $f + |x_2^*|$ also attains its supremum  $f(z_0) + z_2^*(z_0)$ at $z_0$ on $D$.
Otherwise, there exists $w \in D$ such that
$$
f(w)+|x_2^*|(w) > f(z_0)+ x_2^*(z_0),
$$
which implies that $x_2^*(w) <0$. Therefore, we have

$$ f(w)-x_2^*(w) > f(z_0)+x_2^*(z_0) > f(x_0) + x^*(x_0) + \alpha\epsilon\geq f(w) + x^*(w) +\|x_1^*\|,$$
which implies that $-x^{*}_{1}(w)>\|x^{*}_{1}\|$.
This contradiction shows that
$$
f(z_0)+x^{*}_{2}(z_0) = f(z_0)+ |x^{*}_{2}(z_0)| \geq f(x)+|x^{*}_{2}(x)|
$$
for every $x\in D$.
\end{proof}

By the same argument as in the proof of Theorem \ref{first} we can obtain the following: If $f(x_0)<\inf \{f(x):x:\in D\} + \frac{\delta}{2}$, then there exist $g \in X^{*}$ and $x_1 \in D$ such that
$$
g(x_1)=\inf\{g(x):x\in D\},~~~\|f-g\| \leq \epsilon~~~and~~~\|x_1 -x_0 \|\leq \frac{\delta}{\epsilon}.
$$
 Further, we can show in the following that for a bounded closed convex set $D$ the set $\{f: |f| ~\mbox{attains its supremum on}~ D\}$ is dense in $X^*$.

\begin{theorem} \label{main}
Let $D$ be a bounded closed convex set in a Banach space $X$. Given $f\in X^{*}$ and $\epsilon>0$, there exists $x^{*} \in X^{*}$ such that
$|f+x^{*}|$ attains its supremum on $D$ and $\|x^{*}\|\leq \epsilon$. Moreover, if $D$ is symmetric, and $f(x_0)>\|f\|_{D}-\frac{\delta}{2}$ for some
$x_0\in D$ and $\delta>0$, then $x^{*}$ and $x_1 \in D$ can be chosen so that $\|x^{*}\|\leq\epsilon$,
$\| x_0-x_1\| \leq \frac{\delta}{\epsilon}$, and $|f + x^*|$ attains its supremum at $x_1$ on $D$.
\end{theorem}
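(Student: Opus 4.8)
The plan is to derive both statements from the Bishop--Phelps density result in the form of Theorem~\ref{first} together with its infimum analogue recorded in the paragraph just above, using only the elementary identity
\[
\sup_{x\in D}|g(x)|=\max\{\sup_{x\in D}g(x),\ -\inf_{x\in D}g(x)\}\qquad(g\in X^{*}).
\]
(Note that this statement concerns \(|f+x^{*}|\), not the function \(f+|x^{*}|\) of Theorem~\ref{pre}, so that result is not directly applicable.) It is convenient to assume \(D\subseteq B_{X}\), which is harmless after rescaling \(D\), \(f\) and \(\epsilon\); then a functional moves both \(\sup_{D}\) and \(\inf_{D}\) by at most its norm.

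For the first assertion, Theorem~\ref{first} gives \(y^{*}\in X^{*}\) with \(\|y^{*}\|\leq\epsilon/2\) such that \(g:=f+y^{*}\) attains its supremum on \(D\), say at \(p\in D\). If \(\sup_{D}g\geq-\inf_{D}g\), then \(\sup_{x\in D}|g(x)|=\sup_{D}g=g(p)\), and \(g(p)=\sup_{D}g\geq-\inf_{D}g\geq-\sup_{D}g\) forces \(g(p)\geq0\); thus \(|g|\) attains its supremum at \(p\) and \(x^{*}:=y^{*}\) works. Otherwise \(\gamma:=-\inf_{D}g-\sup_{D}g>0\), and I would apply the infimum version of Theorem~\ref{first} to \(g\), taking \(z^{*}\in X^{*}\) with \(\|z^{*}\|\leq\min\{\epsilon/2,\gamma/2\}\) and such that \(g+z^{*}\) attains its infimum on \(D\) at some \(q\in D\). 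Since \(\|z^{*}\|\leq\gamma/2\) one still has \(\sup_{D}(g+z^{*})\leq-\inf_{D}(g+z^{*})\), hence \(\sup_{x\in D}|(g+z^{*})(x)|=-\inf_{D}(g+z^{*})=-(g+z^{*})(q)=|(g+z^{*})(q)|\), the last equality because \((g+z^{*})(q)=\inf_{D}(g+z^{*})\leq-\inf_{D}(g+z^{*})\) forces \((g+z^{*})(q)\leq0\). So \(x^{*}:=y^{*}+z^{*}\), which has norm \(\leq\epsilon\), does the job.

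For the last (symmetric) statement the key observation is that \(D=-D\) forces \(\|f\|_{D}=\sup_{x\in D}|f(x)|=\sup_{D}f\), since \(|f(x)|=\max\{f(x),f(-x)\}\leq\sup_{D}f\) and \(-x\) runs over \(D\) as \(x\) does. Hence the hypothesis becomes \(f(x_{0})>\sup_{D}f-\delta/2\), and Theorem~\ref{first}, applied with the given \(\epsilon\) and \(\delta\), produces \(x^{*}\in X^{*}\) and \(x_{1}\in D\) with \(\|x^{*}\|\leq\epsilon\), \(\|x_{0}-x_{1}\|\leq\delta/\epsilon\) and \((f+x^{*})(x_{1})=\sup_{D}(f+x^{*})\); symmetry again gives \(\sup_{x\in D}|(f+x^{*})(x)|=\sup_{D}(f+x^{*})\), and evaluating at \(-x_{1}\in D\) gives \((f+x^{*})(x_{1})\geq0\), so \(|f+x^{*}|\) attains its supremum on \(D\) at \(x_{1}\). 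The one delicate point is the choice of \(z^{*}\) in the second case of the first assertion: it must be small compared with \(\gamma\) so that adding it cannot let \(\sup_{D}\) overtake \(-\inf_{D}\), which is precisely what keeps the supremum of \(|f+x^{*}|\) equal to the quantity \(-\inf_{D}(f+x^{*})\), now attained. Beyond this, everything is bookkeeping of norms and signs and presents no real obstacle.
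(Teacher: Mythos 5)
Your proof is correct, and for the first assertion it takes a genuinely different route from the paper. The paper derives the existence of $x^{*}$ from Theorem~\ref{pre} (the Stegall-type statement that $f+x^{*}$ and $f+|x^{*}|$ can be made to attain their suprema simultaneously), and then runs a three-case analysis according to whether $\sup_{D}f$ is greater than, equal to, or less than $|\inf_{D}f|$; the equality case is the delicate one and is resolved by a reduction to the first case applied to $-f-x^{*}$. You instead bypass Theorem~\ref{pre} entirely: the identity $\sup_{D}|g|=\max\{\sup_{D}g,\,-\inf_{D}g\}$ reduces everything to a two-step perturbation --- first make the supremum attained via Theorem~\ref{first}, and only if $-\inf_{D}$ still dominates, add a second perturbation of norm at most $\min\{\epsilon/2,\gamma/2\}$ (where $\gamma$ is the gap) so that the infimum becomes attained without letting the supremum overtake it. This is more elementary, needs only the Bishop--Phelps density statement and its infimum analogue (both recorded in the paper), and avoids the case split on the sign structure of $f$; what it does not give you is the extra information in Theorem~\ref{pre} itself, which the paper proves anyway for independent interest. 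Your treatment of the symmetric ``moreover'' clause coincides with the paper's: both reduce to $\|f\|_{D}=\sup_{D}f$ and invoke Theorem~\ref{first} directly, and your added remark that $(f+x^{*})(x_{1})\geq 0$ by evaluating at $-x_{1}$ just makes explicit a step the paper leaves implicit. The normalization $D\subseteq B_{X}$, which you need so that $\|z^{*}\|\leq\gamma/2$ controls the shifts of $\sup_{D}$ and $\inf_{D}$, is the same one the paper makes.
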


\begin{proof} We may assume that $D$ is a bounded closed convex subset of $B_X$. Let $s=\sup_{D}f$. We now consider three cases.

$1^{\circ}$.  Suppose that $ s > |\inf_{D}f|$. Set $\eta = s- |\inf_{D}f| >0$.
By Theorem \ref{pre}, we can choose $\|x^{*}\|\leq \min\{\frac{\eta}{2},\epsilon\}$ so that
both $(f+x^{*})$ and $(f+|x^{*}|)$ attain their suprema at $x_0 \in D$ and
$(f+x^{*})(x_0)=(f+|x^{*}|)(x_0)$. Since $f(x_0)+x^{*}(x_0) \geq f(x)+|x^{*}(x)|$ for every $x\in D$, we have $f(x_0)+x^{*}(x_0) \geq s$. Therefore,
for every $x \in D$ we have
$$
-f(x)-x^{*}(x) \leq -f(x)+|x^{*}(x)| \leq s-\eta+\frac{\eta}{2}<s\leq f(x_0)+x^{*}(x_0),
$$
which implies that $|f(x)+x^{*}(x)|\leq f(x_0)+x^{*}(x_0)= |(f+ x^*)(x_0)|$ for every $x\in D$.

$2^{\circ}$. Suppose that $ s = |\inf_{D}f|$.
By Theorem \ref{pre}, we can choose $x^{*}$ so that $f+x^{*}$ attains its supremum at $x_0 \in D$ and $\|x^{*}\|\leq\frac{\epsilon}{2}$.
If $|f+x^{*}|(x)\leq f(x_0)+x^{*}(x_0)$ for every $x\in D$, we are done. Otherwise, there exists $y \in D$ such that
$|f+x^{*}|(y)>f(x_0)+x^{*}(x_0)$. Clearly, $(f+x^*)(y) <0$ and
$$
-f(y)-x^{*}(y)>f(x_0)+x^{*}(x_0)\geq f(x)+x^{*}(x)
$$
for every $x\in D$. Set $g=-f-x^{*}$. Then we have $g(y)>-g(x_0)\geq -g(x)$ for every $x \in D$, which means that $\sup_{D}g > |\inf_{D} g|$.
 It follows from the case $1^{\circ}$ that there exist $y^{*} \in X^{*}$ and $y_0 \in D$ such that $\|y^{*}\|\leq\frac{\epsilon}{2}$ and $|g+y^{*}|(x) \leq (g+y^{*})(y_0)$ for every $x\in D$. Therefore,
$$
|f+ (x^{*}-y^{*})|(x)=|g+y^{*}|(x)\leq (g+y^{*})(y_0) = (-f-x^{*}+y^{*})(y_0)\leq|f+(x^{*}-y^{*})|(y_0).
$$
for every $x\in D$ and $\|x^{*}-y^{*} \|\leq \frac{1}{2}\epsilon+\frac{1}{2}\epsilon\leq\epsilon$.

$3^{\circ}$  Suppose that $\sup_{D} f < |\inf_{D}f|$. We can prove this case by applying the case $1^{\circ}$ to $(-f)$.

Further, if $D$ is symmetric, we note that $\|f\|_{D}=\sup_{D}f=|\inf_{D}f|$. From the assumption and Theorem \ref{first},
we can choose $x^{*}\in X^{*}$ and $x_1 \in D$ so that
$$
\sup_{D}(f+x^{*})=(f+x^{*})(x_1),~~~\|x^{*}\|\leq \epsilon~~~\mbox{and}~~~\|x_1-x_0\|\leq\frac{\delta}{\epsilon}.
$$
Since $(f+x^{*})(x_1)=\|f+x^{*}\|_{D}$, it means that $|f+x^{*}|$ attains its supremum at $x_1$ on $D$.
\end{proof}

If $D$ is not symmetric in the above theorem, we can hardly choose $x_1\in D$ satisfying $\| x_0-x_1\| \leq \frac{\delta}{\epsilon}$ and also
that $|f + x^*|$ attains its supremum at $x_1$ on $D$. Indeed, for $f\in S_{X^*}$ which does not attain its norm and for $0<\epsilon <\frac{1}{3}$, we let $$S_{1}=\{x\in B_X : f(x) \geq 1-\epsilon^2 \},~~ T=B_X \cap \ker f,~~\mbox{and}~~ S_{2}=\overline{(-S_{1} + T)}.$$ Note that $S_2$ is a closed bounded convex subset of $2B_X$.
We set $D=\overline{co}(S_{1}\cup S_{2})$. Clearly, $\|f\|_D =1$ and we can see easily that there exists $x_0\in D\cap S_X$
such that $f(x_0)> 1- \frac{\epsilon^2}{2}$.

We claim that for every $\|x^*\|\leq \epsilon$ the function $|f+x^*|$ cannot attain its supremum on $D$
at any point $z\in D$ with $\|z-x_0\|\leq \epsilon$. Otherwise, there exists $\|x^*\|\leq \epsilon$ such that
the function $|f+x^*|$ attains its supremum on $D$
at some point $z\in D$ with $\|z-x_0\|\leq \epsilon$. Since $$(f+x^*)(z) = (f+x^*)(x_0) + (f+x^*) (z-x_0)\geq 1- \frac{\epsilon^2}{2} -\epsilon-\epsilon (1+\epsilon) >0,$$ we can deduce that $(f+x^*)(z)=|(f+x^*)(z)|$ is $\sup_D (f+x^*)$. Choose a sequence
$\{z_n\}$ in $co(S_{1}\cup S_{2})$ converging to $z$. For each $n\in \N$ we can write $z_n = (1-\lambda_n)x_n + \lambda_n y_n$, where
$x_n\in S_1$, $y_n\in S_2$ and $0\leq \lambda_n\le 1$.  An easy computation shows that
$$(f+x^*)(z)\geq 1-2\epsilon-\frac{3\epsilon^2}{2}~~\mbox{and}~~
(f+x^*)(y_n)\leq -1 + \epsilon^2 + 2\epsilon.$$ It follows from these inequalities that for each $n\in \N$

\begin{eqnarray*}
(1+\epsilon) \|z-z_n\| &>& |(f+x^{*})(z-z_n)| =
|(f+x^{*})(z)-(f+x^{*})((1-\lambda_n)x_n + \lambda_n y_n ) |\\
&=& (1-\lambda_n)\left((f+x^{*})(z)-(f+x^{*})(x_n)\right) + \lambda_n \left((f+x^{*})(z)-(f+x^{*})(y_n)\right) \\
&\geq&  \lambda_n \left((1-2\epsilon-\frac{3\epsilon^2}{2})-(-1 + \epsilon^2 + 2\epsilon) \right)= \lambda_n (2-4\epsilon -\frac{5\epsilon^2}{2}) \geq 0.
\end{eqnarray*}

 Therefore, we have that for each $n$ $$0\leq \lambda_n \leq \frac{(1+\epsilon)\|z-z_n\|}{(2-4\epsilon -\frac{5\epsilon^2}{2})},$$ which implies that
 $\lambda_n\to 0$ as $n\to \infty$. It means that $z\in S_1$.

 Now we recall that $f+x^*$ attains its supremum on $D$, but $f$ doesn't attain its supremum on $D$ because $\sup_{B_X} f = \sup_{D} f$ and $f$ doesn't attain its norm. Hence  $x^{*} \neq \alpha f$ for any $\alpha \in \R$, and there exists $w \in T$ with $x^{*}(w)<0$ and $\|w\|<\epsilon$. Since $-z+w \in (-S_1 + T) \subset D$, we obtain
 $$\sup_D|f+x^*| \geq |(f+x^*)(-z+w)|>(f+x^{*})(z)= |(f+x^{*})(z)|,$$ which is a contradiction.

\section{Operators attaining their suprema in norm on bounded closed convex sets}
It was shown in \cite{AAGM2} that for all Banach spaces $X$ and $Y$ of finite dimension $(X,Y)$ has the \emph{BPBp} for $B_X$. We show that it is still true for arbitrary bounded
closed convex subsets which are not necessarily symmetric.

\begin{theorem}
Let $X$ and $Y$ be finite dimensional Banach spaces. Then the pair $(X,Y)$ has the \emph{BPBp} on every bounded convex closed subset $D$ of $X$.
\end{theorem}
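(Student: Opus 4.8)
The plan is to argue by contradiction and to exploit compactness twice: the set $D$ is compact, and --- after one preliminary reduction --- so is the relevant set of operators $\{T\in\mathcal{L}(X,Y):\|T\|_D=1\}$. A would-be counterexample then converges to a genuine norm-attaining operator, which already serves as its own correction.

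First I would reduce to the case $\mathrm{span}(D)=X$. In general put $Z:=\mathrm{span}(D)$, a finite dimensional subspace of $X$, fix any linear projection $P\colon X\to Z$, and set $\lambda:=\|P\|<\infty$. Since $\|T\|_D=\|T|_Z\|_D$ for every $T\in\mathcal{L}(X,Y)$, the statement for the pair $(Z,Y)$ and the set $D$ --- which spans $Z$, hence falls under the reduced case --- lifts to $(X,Y)$: given $T\in\mathcal{L}(X,Y)$ with $\|T\|_D=1$ and $x\in D$ with $\|Tx\|>1-\eta$, apply the \emph{BPBp} on $D$ for $(Z,Y)$ with parameter $\epsilon/(1+\lambda)$ to $T|_Z$ and $x$, obtaining $S_0\in\mathcal{L}(Z,Y)$ and $z\in D$ with $\|S_0z\|=1=\|S_0\|_D$, $\|x-z\|<\epsilon/(1+\lambda)$ and $\|T|_Z-S_0\|<\epsilon/(1+\lambda)$; then $S:=T+(S_0-T|_Z)\circ P$ satisfies $S|_Z=S_0$, so $\|S\|_D=\|S_0\|_D=1$ and $\|Sz\|=\|S_0z\|=1$, while $\|S-T\|\leq\lambda\,\|S_0-T|_Z\|<\epsilon$. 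Thus we may assume $\mathrm{span}(D)=X$, in which case $\|\cdot\|_D$ is a genuine norm on $\mathcal{L}(X,Y)$ --- it can vanish only on operators annihilating $\mathrm{span}(D)=X$ --- hence equivalent to the operator norm since $\dim\mathcal{L}(X,Y)<\infty$; in particular $\{T:\|T\|_D=1\}$ is compact and the operator norms of such $T$ are uniformly bounded.

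Now suppose the \emph{BPBp} on $D$ fails for $(X,Y)$, with $\mathrm{span}(D)=X$: there is $\epsilon>0$ so that for every $n$ there are $T_n\in\mathcal{L}(X,Y)$ with $\|T_n\|_D=1$ and $x_n\in D$ with $\|T_nx_n\|>1-1/n$, yet no pair $(S,z)$ with $z\in D$, $\|Sz\|=1=\|S\|_D$, $\|x_n-z\|<\epsilon$ and $\|T_n-S\|<\epsilon$. Passing to a subsequence (using compactness of $\{T:\|T\|_D=1\}$ and of $D$), let $T_n\to T$ in operator norm and $x_n\to x\in D$. Then $\|T\|_D=1$ because $\|\cdot\|_D$ is Lipschitz with respect to the operator norm, and from $\|T_nx_n-Tx\|\leq\|T_n\|\,\|x_n-x\|+\|T_n-T\|\,\|x\|\to0$ we get $\|Tx\|=\lim_n\|T_nx_n\|=1=\|T\|_D$; that is, $T$ attains its norm on $D$ at $x$. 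For $n$ large enough that $\|T_n-T\|<\epsilon$ and $\|x_n-x\|<\epsilon$, the pair $(S,z):=(T,x)$ then contradicts the choice of $T_n,x_n$.

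The one place requiring care --- and, I expect, the only real obstacle --- is the first step: when $D$ does not span $X$ the seminorm $\|\cdot\|_D$ fails to be a norm and the operators in a hypothetical counterexample need not be norm-bounded, so the compactness argument collapses unless one first passes to $Z=\mathrm{span}(D)$ and extends operators back to $X$ through a (finite-norm) projection. Once that is arranged, the rest is a routine limiting argument; note in passing that it also shows that in finite dimensions the \emph{BPp} on $D$ is automatic, since every operator attains $\|\cdot\|_D$ on the compact set $D$.
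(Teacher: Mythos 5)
Your proof is correct, and its core --- negate the \emph{BPBp} on $D$, use finite dimensionality to extract convergent subsequences of the offending operators and points, and observe that the limit pair $(T,x)$ already satisfies $\|Tx\|=1=\|T\|_D$ and hence, for large $n$, contradicts the choice of $(T_n,x_n)$ --- is exactly the paper's argument. What you add, and the paper omits, is the preliminary reduction to $\mathrm{span}(D)=X$. This is not cosmetic: when $D$ does not span $X$, the normalization $\|T_n\|_D=1$ gives no bound on the operator norms $\|T_n\|$ (take $D$ a segment in $\mathbb{R}^2$ and $T_n$ blowing up in the transverse direction), so the paper's phrase ``by finite dimensionality, we can choose $T_n$ converging to $T_0$'' is unjustified as written; one must either normalize as you do --- pass to $Z=\mathrm{span}(D)$, where $\|\cdot\|_D$ is a norm equivalent to the operator norm, and transport the corrected operator back via $S=T+(S_0-T|_Z)\circ P$ --- or restrict attention to $T|_Z$ from the outset. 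Your extension step and the estimate $\|S-T\|\leq\|P\|\,\|S_0-T|_Z\|<\epsilon$ are right, and your closing remark that the \emph{BPp} on $D$ is automatic in finite dimensions is also correct. In short: same strategy as the paper, but your version closes a genuine gap in the published proof.
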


\begin{proof}
Otherwise, there exists a bounded closed convex subset $D$ in $X$ satisfying following condition: For some $\epsilon_0$, we can find
$T_{n}\in \mathcal{L}(X,Y)$ such that for every $T \in \mathcal{L}(X,Y)$ with $\|T_{n}-T\|\leq\epsilon_0$, there is $x^{T}_{n}\in D$ satisfying
$\|T_{n}(x^{T}_{n})\|>\|T_{n}\|_{D}-\frac{1}{n}$ and dist$(x^{T}_{n},NA_{D}(T))\geq\epsilon_0$, where $NA_{D}(T)=\{z\in D : \|T(z)\|=\|T\|_{D}\}$.
By finite dimensionality, we can choose $T_{n}$ converging to $T_0 \in \mathcal{L}(X,Y)$ and $x^{T_0}_{n}$ converging to $x_0 \in D$.
Then we can easily show that $\|T_0 (x_0) \|=\|T_0\|_{D}$ and $\|x^{T_0}_{n}-x_0\| \geq \epsilon_0$, which contradicts $\|x^{T_0}_{n}-x_{0}\|\rightarrow 0$.
\end{proof}

J. Lindenstrauss \cite{Lindens} introduced the notion of property $\beta$: A Banach space $Y$ is called to have property $\beta$ if there is $0 \leq\lambda<1$ and a family $\{(y_{\alpha},f_{\alpha}) \in B_Y \times B_Y^{*} \}$ such that
(i) $f_{\alpha}(x_{\alpha})=\|x_{\alpha}\|=1$
(ii) $|f_{\alpha}(y_{\beta})| \leq \lambda$ for $\alpha \neq \beta$
(iii) $\|y\|=\sup_{\alpha}|f_{\alpha}(y)|$.

He showed that if a Banach space $Y$ has property $(\beta)$, then the set of all norm attaining operators from $X$ into $Y$ is dense in $\mathcal{L}(X,Y)$ for every Banach space $X$.
In 1982, J. Partington \cite{Partington} proved rather a surprising result that every Banach space can be renormed to have property ($\beta$). Acosta et al. \cite{AAGM2} showed that if $Y$ has property $(\beta)$, then $(X,Y)$ has the \emph{BPBp} on $B_X$ for every Banach space $X$. Now we prove that it is still true for bounded closed convex subsets of $X$.

\begin{theorem}\label{beta}
Let $Y$ be a Banach space with property $(\beta)$ and $D$ be a bounded closed convex set in $X$.
Then $(X,Y)$ has the \emph{BPp} on $D$. Moreover, if $D$ is symmetric, then $(X,Y)$ has the \emph{BPBp} on $D$.
\end{theorem}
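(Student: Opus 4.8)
The plan is to run the classical argument that yields the \emph{BPBp} on $B_X$ when $Y$ has property $(\beta)$ (Acosta et al.~\cite{AAGM2}), with Theorem \ref{main} playing the role of the Bishop--Phelps theorem and with the quantity $\|\cdot\|_D$ in place of the operator norm. Fix $\lambda\in[0,1)$ and a family $\{(y_\alpha,f_\alpha)\}\subset B_Y\times B_{Y^*}$ realizing property $(\beta)$ for $Y$; for $g\in X^*$ write $\|g\|_D:=\sup_{x\in D}|g(x)|$, and for $g\in X^*$, $y\in Y$ let $g\otimes y$ denote the rank-one operator $x\mapsto g(x)y$. For every $T\in\mathcal L(X,Y)$ one has $\|T\|_D=\sup_{x\in D}\sup_\alpha|f_\alpha(Tx)|=\sup_\alpha\|T^*f_\alpha\|_D$ (so in particular $\|T^*f_\alpha\|_D\leq\|T\|_D$), and $\|g\otimes y_\alpha\|_D=\|g\|_D$. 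We may assume $D\subset B_X$, so $\|g\|_D\leq\|g\|$ for $g\in X^*$, and $\|T\|_D=1$.

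Given $x_0\in D$ with $\|Tx_0\|>1-\eta$, pick $\alpha_0$ with $|f_{\alpha_0}(Tx_0)|>1-\eta$; replacing the single pair $(y_{\alpha_0},f_{\alpha_0})$ by $(-y_{\alpha_0},-f_{\alpha_0})$ if necessary (this still realizes property $(\beta)$ with the same $\lambda$), we may assume $f_{\alpha_0}(Tx_0)>1-\eta$, so $\|T^*f_{\alpha_0}\|_D\geq f_{\alpha_0}(Tx_0)>1-\eta$. \emph{Step 1 (make $\alpha_0$ strictly dominant).} For a small $\rho>0$ depending only on $\epsilon$, set $T_1:=(1-\rho)T+\rho\,\big((T^*f_{\alpha_0})\otimes y_{\alpha_0}\big)$. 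Using $f_{\alpha_0}(y_{\alpha_0})=1$ and $|f_\beta(y_{\alpha_0})|\leq\lambda$ for $\beta\neq\alpha_0$, one computes $T_1^*f_{\alpha_0}=T^*f_{\alpha_0}$, $\|T_1^*f_\beta\|_D\leq(1-\rho)+\rho\lambda=1-\rho(1-\lambda)$ for $\beta\neq\alpha_0$, and $\|T_1-T\|_D=\rho\,\|(T^*f_{\alpha_0})\otimes y_{\alpha_0}-T\|_D\leq\rho\big(\|T^*f_{\alpha_0}\|_D+\|T\|_D\big)\leq2\rho$. \emph{Step 2 (norm attainment on the dominant functional).} Apply Theorem \ref{main} to $f:=T^*f_{\alpha_0}$ with $\delta:=2\eta$ and perturbation bound $\epsilon_0$ (a small number depending only on $\epsilon$ and $\lambda$): since $f(x_0)>1-\eta\geq\|f\|_D-\delta/2$ and, in the symmetric case, $D$ is symmetric, we obtain $x^*\in X^*$ and $x_1\in D$ with $\|x^*\|\leq\epsilon_0$, $\|x_1-x_0\|\leq 2\eta/\epsilon_0$, such that $|f+x^*|$ attains its supremum $m:=\|f+x^*\|_D$ on $D$ at $x_1$; here $(1-\eta)-\epsilon_0\leq m\leq1+\epsilon_0$.

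Now set $S:=T_1+x^*\otimes y_{\alpha_0}$. Then $S^*f_{\alpha_0}=T^*f_{\alpha_0}+x^*=f+x^*$, so $|S^*f_{\alpha_0}|$ attains $\|S^*f_{\alpha_0}\|_D=m$ at $x_1$, while $\|S^*f_\beta\|_D\leq\|T_1^*f_\beta\|_D+\lambda\|x^*\|_D\leq1-\rho(1-\lambda)+\epsilon_0$ for $\beta\neq\alpha_0$. If $\epsilon_0$ and then $\eta$ are taken so small that $\rho(1-\lambda)\geq\eta+2\epsilon_0$, then $m\geq1-\rho(1-\lambda)+\epsilon_0\geq\sup_{\beta\neq\alpha_0}\|S^*f_\beta\|_D$, hence $\|S\|_D=m$ and $\|Sx_1\|\geq|f_{\alpha_0}(Sx_1)|=|(f+x^*)(x_1)|=m=\|S\|_D$, i.e.\ $\|Sx_1\|=\|S\|_D=m$. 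Renormalizing $\bar S:=S/m$ gives $\|\bar S\|_D=\|\bar Sx_1\|=1$ and $\|\bar S-T\|_D\leq|1-m|+\|x^*\|_D+\|T_1-T\|_D\leq(\eta+\epsilon_0)+\epsilon_0+2\rho$. Fixing the parameters in the order $\rho:=\epsilon/8$, then $\epsilon_0<\min\{\epsilon/8,\ \rho(1-\lambda)/4\}$, then $\eta=\eta_D(\epsilon):=\min\{\rho(1-\lambda)-2\epsilon_0,\ \epsilon\epsilon_0/2\}>0$, yields $\|\bar S-T\|_D<\epsilon$ and $\|x_1-x_0\|\leq2\eta/\epsilon_0<\epsilon$; this is the \emph{BPBp} on $D$ for symmetric $D$. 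For the \emph{BPp} on an arbitrary $D$ the same construction works: pick $\alpha_0$ with $\|T^*f_{\alpha_0}\|_D>1-\eta$ directly, use only the first (symmetry-free) assertion of Theorem \ref{main} to produce $x^*$ and $x_1$, and discard the estimate on $\|x_1-x_0\|$; no uniform modulus is needed, so $\rho,\epsilon_0,\eta$ may simply be chosen small.

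I expect the genuinely delicate point to be the parameter bookkeeping guaranteeing that $\alpha_0$ remains strictly dominant \emph{after both perturbations}: the value $m$ is only known to exceed $1-\eta-\epsilon_0$, yet it must beat the worst competitor $1-\rho(1-\lambda)+\epsilon_0$, which forces the nested choice ($\rho$, then $\epsilon_0$, then $\eta$) and—crucially—lets $\rho,\epsilon_0$ and $\eta_D(\epsilon)$ depend only on $\epsilon$ and on the invariant $\lambda$ of $Y$, never on $T$ or $x_0$. A second, more structural point is that the perturbations must be measured by $\|\cdot\|_D$ rather than by the operator norm, because an operator with $\|T\|_D=1$ can have arbitrarily large operator norm when $D$ is small; one therefore cannot afford to rescale $T$ itself by a factor tied to $\|T\|$, and this is exactly why Step 1 uses the convex combination $(1-\rho)T+\rho\,(T^*f_{\alpha_0})\otimes y_{\alpha_0}$, whose $\|\cdot\|_D$-distance from $T$ is at most $2\rho$ irrespective of $\|T\|$.
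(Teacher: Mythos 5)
Your proposal is correct and follows the same overall strategy as the paper's proof: use Theorem \ref{main} applied to $T^{*}f_{\alpha_0}$ (the symmetric case supplying the point control $\|x_1-x_0\|\leq\delta/\epsilon$, the first assertion alone sufficing for the \emph{BPp}), then make $\alpha_0$ strictly dominant among the $f_\alpha$'s by a rank-one perturbation in the direction $y_{\alpha_0}$. The differences are in the bookkeeping. The paper replaces the $\alpha_0$-component outright, setting $T_0=T+\big((1+\eta)g-T^{*}f_{\alpha_0}\big)\otimes y_{\alpha_0}$ so that $T_0^{*}f_{\alpha_0}=(1+\eta)g$ is inflated above the competitors; you instead damp the competitors via the convex combination $(1-\rho)T+\rho\,(T^{*}f_{\alpha_0})\otimes y_{\alpha_0}$ before adding $x^{*}\otimes y_{\alpha_0}$, and you renormalize at the end so that $\|\bar S\|_D=1$ exactly (a step the paper omits, since its $T_0$ has $\|T_0\|_D=(1+\eta)\|g\|_D\neq 1$). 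The one substantive divergence is the metric in which the perturbation is measured. The paper estimates $\|T-T_0\|$ in the operator norm and obtains a bound containing the factor $M+\epsilon$ with $M=\|T\|$, which degenerates as $M\to\infty$ (possible for fixed $\|T\|_D=1$ when $D$ is small in $B_X$); your bound $\|\bar S-T\|_D<\epsilon$ is uniform in $T$ but is for the seminorm $\|\cdot\|_D$ rather than the operator norm appearing in the paper's definition of the \emph{BPBp} on $D$. Neither argument controls $\|T-S\|$ in operator norm uniformly over $\{T:\|T\|_D=1\}$ --- your $\|T_1-T\|$ is of order $\rho\|T\|$, and the paper's error contains the term $\eta\|g\|\approx\eta\|T\|$ --- so your proof is no weaker than the paper's on this point, and your explicit identification of the issue is a gain in clarity; just be aware that, read literally against the stated definition, both arguments yield the property only for the $\|\cdot\|_D$-distance (equivalently, with a modulus depending on $\|T\|$).
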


\begin{proof}
Without loss of generality we may assume $D \subseteq B_X$, and first consider the case where $D$ is symmetric.
Let $$0<\epsilon<\frac{1-\lambda}{2+\lambda}~~\mbox{and}~~\frac{\epsilon(2+\lambda)}{1-2\epsilon-\lambda-\lambda\epsilon}\leq\eta\leq\frac{2\epsilon(2+\lambda)}{1-2\epsilon-\lambda-\lambda\epsilon}.$$
Assume that $T \in \mathcal{L}(X,Y)$, $\|T\|_{D}=1$, $\|T\|=M$ and $\|Tx_0\|>1-\frac{\epsilon^2}{2}$ for some $x_0\in D$. Then we can choose $\alpha_0$ so that $|(T^{*}f_{\alpha_0})(x_0)|=|f_{\alpha_0}(Tx_0)|>1-\frac{\epsilon^2}{2}$.
By Theorem \ref{main}, there exist
$g\in X^{*}$ and $z_0 \in D$ such that
$$
|g(z_0)|=\|g\|_{D},~~\|g-T^{*}f_{\alpha_0}\|\leq\epsilon,~~and~~\|x_0 -z_0\| \leq \epsilon.
$$
Since $\|g-T^{*}f_{\alpha_0}\|_{D}\leq\|g-T^{*}f_{\alpha_0} \|\leq\epsilon$, we have $1-2\epsilon\leq1-\frac{\epsilon^2}{2}-\epsilon \leq \|g\|_{D}
\leq 1+\epsilon.$
Define $T_0 \in \mathcal{L}(X,Y)$ by
$$
T_0 (x)=T(x)+((1+\eta)g(x)-T^{*}f_{\alpha_0}(x))y_{\alpha_0}.
$$
Clearly
$$
\|T^{*}_0 f_{\alpha_0}\|_{D}=(1+\eta)\|g\|_{D} \geq (1+\eta)(1-2\epsilon).
$$
For $\alpha \neq \alpha_0$,
\[ \begin{array}{lcl}
\|T^{*}_0 f_{\alpha}\|_{D} &\leq&
\|T^{*}f_{\alpha}\|_{D}+\lambda(\|g-T^{*}f_{\alpha_0}\|_{D}+\eta \|g\|_{D})
\\[5pt]
&\leq& 1+\lambda(\epsilon+\eta (1+\epsilon))
\\[5pt]
&\leq& (1+\eta)(1-2\epsilon),
\end{array} \]
where the last inequality follows from $\eta \geq \frac{\epsilon(\lambda+2)}{1-2\epsilon-\lambda-\lambda\epsilon}$.
Since $\|T^{*}_0 f_{\alpha}\|_{D} \leq \|T^{*}_0 f_{\alpha_0}\|_{D}$ for all $\alpha \neq \alpha_0$, we have
$$
\|T_0\|_{D}=\sup_{\alpha}\{\|T^{*}_0 f_{\alpha}\|_{D}\}=\|T^{*}_0 f_{\alpha_0}\|_{D}=f_{\alpha_0}(T_0 z_0)
\leq \|T_0 z_0\|\leq \|T_0\|_{D}.
$$
Therefore, $T_0$ attains its supremum at $z_0\in D$ with $\|x_0 -z_0 \|\leq \epsilon$ and
$$
\|T-T_0\|\leq\|g-T^{*}f_{\alpha_0}\|+\eta\|g\| \leq \epsilon+\eta (M+\epsilon)\leq \epsilon\left(1+\frac{2(2+\lambda)(M+\epsilon)}{1-2\epsilon-\lambda-\lambda\epsilon}\right).
$$

For a bounded closed convex subset $D$, by Theorem \ref{main}, we can choose $g \in X^{*}$ and $z_0 \in D$ so that
$$
\|g\|_{D}=|g(z_0)|~~and~~\|g-T^{*}f_{\alpha_0}\|\leq\epsilon.
$$
The rest of proof follows similarly.
\end{proof}

Recall that the modulus of convexity of a Banach space $X$ is defined on $B_X$ by
$$
\delta(\epsilon)=\inf \left\{ 1-\frac{\|x+y\|}{2} : x,y\in B_{X},~~\|x-y\|\geq\epsilon \right\}.
$$
We can naturally extend this notion for a bounded closed absorbing convex set $D$. We define $\delta_{D}(\epsilon)$ for $0<\epsilon <1$  by
$$
\delta_{D}(\epsilon)=\inf \left\{ \frac{1}{2}\rho_{D}(x)+\frac{1}{2}\rho_{D}(y)-\rho_{D}\left(\frac{x+y}{2}\right) : x,y\in D,~~\rho_{D}(x-y)\geq\epsilon \right\},
$$
where $\rho_{D}$ is the Minkowski functional of $D$, that is, $\rho_{D}(x)=\inf\{\lambda >0 : x \in \lambda D \}$.

 In the following we get such a general result that for a bounded closed absorbing convex subset $D$ of $X$ with positive modulus convexity, the pair $(X,Y)$ has \emph{BPBp} on $D$ for every Banach space $Y$.

\begin{theorem}
Let $X$ and $Y$ be (real or complex) Banach spaces and $D$ be a bounded closed absorbing convex subset of $B_X$ such that $\delta_{D}(\epsilon)>0$ for every $0<\epsilon <\frac{1}{2}$. If
$T \in S_{\mathcal{L}(X,Y)}$ and $x_1 \in D$ satisfy
$$
\|Tx_1\|>\|T\|_D -\epsilon^{3}\delta_{D}(\epsilon),
$$
 for sufficiently small $\epsilon$ relatively to $\|T\|_{D}$, then there exist $S \in \mathcal{L}(X,Y)$ and $z\in D$ such that $\|Sz\|=\|S\|_{D}$, $\|S-T\|<\frac{4\epsilon^2}{1-\epsilon}$ and $\|x_1 -z\|\leq \rho_D(x_1-z)<\frac{\epsilon}{1-\epsilon}.$
\end{theorem}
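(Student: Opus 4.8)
The plan is to imitate the classical Lindenstrauss--type argument for spaces with positive modulus of convexity, but carried out with the Minkowski functional $\rho_D$ in place of the norm of $X$, so that all estimates take place "along $D$". First I would normalize: we may assume $\|T\|_D=1$ (dividing $T$ by $\|T\|_D$; this is where the phrase "sufficiently small $\epsilon$ relative to $\|T\|_D$'' is used, to keep the perturbation bounds uniform). Pick $y^*\in S_{Y^*}$ with $y^*(Tx_1)>1-\epsilon^3\delta_D(\epsilon)$ and set $x^*=T^*y^*\in X^*$; then $\|x^*\|_D\le 1$ and $x^*(x_1)>1-\epsilon^3\delta_D(\epsilon)$.

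The core step is to produce, by a geometric series / successive-perturbation scheme exactly as in the proof that uniformly convex spaces have the BPBp on $B_X$, a point $z\in D$ and a functional $\tilde x^*\in X^*$ with $\|\tilde x^*\|_D=\tilde x^*(z)=\rho_D(z)$, together with the quantitative bounds $\rho_D(x_1-z)<\epsilon/(1-\epsilon)$ and $\|\tilde x^*-x^*\|<2\epsilon^2/(1-\epsilon)$ (roughly). Concretely: using $\delta_D(\epsilon)>0$ one shows that for $w\in D$, if $x^*(w)$ is within $\epsilon^3\delta_D(\epsilon)$ of $\sup_D x^*$, then $\rho_D(w-v)<\epsilon$ whenever $v$ is similarly near-maximizing — a $\delta_D$-convexity estimate. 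Then one iterates a Bishop--Phelps/Ekeland-type perturbation (Theorem \ref{first} applied with the bounded closed convex set $D$, since $\rho_D(x_1-z)$ controls $\|x_1-z\|$ as $D\subseteq B_X$): at stage $n$ one has a functional $x_n^*$ and a near-maximizer, replaces $x_n^*$ by $x_{n+1}^*$ attaining its sup on $D$ with $\|x_{n+1}^*-x_n^*\|$ of order $\epsilon^{n+1}$ times the relevant constants, and the $\delta_D$-estimate forces the successive maximizers to form a Cauchy sequence with displacements summing to a geometric series bounded by $\epsilon/(1-\epsilon)$; the limiting functional is $\tilde x^*$ and the limiting point is $z$, with $\tilde x^*(z)=\rho_D(z)=\|\tilde x^*\|_D$ and $z\in D$ by closedness.

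Having $\tilde x^*$ attaining $\|\tilde x^*\|_D$ at $z\in D$, I would finally rebuild the operator: set
$$
S(x)=T(x)+\bigl(\tilde x^*(x)-x^*(x)\bigr)\,y,
$$
where $y\in S_Y$ is chosen with $y^*(y)=1$ (or within $\epsilon$ of a norming vector for $\tilde x^*(z)$, via Bishop--Phelps in $Y$). Then $S^*y^*=x^*+(\tilde x^*-x^*)=\tilde x^*$ up to the chosen $y$, so $\|Sz\|\ge |y^*(Sz)| = \tilde x^*(z)=\|\tilde x^*\|_D\ge \|S\|_D$, forcing equality $\|Sz\|=\|S\|_D$; and $\|S-T\|\le \|\tilde x^*-x^*\|<4\epsilon^2/(1-\epsilon)$ after absorbing the constants. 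The displacement bound $\|x_1-z\|\le\rho_D(x_1-z)<\epsilon/(1-\epsilon)$ comes directly from the iteration.

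The main obstacle I expect is the bookkeeping of the two different "norms" of a functional, $\|x^*\|$ versus $\|x^*\|_D=\sup_D x^*$, through the iteration: Theorem \ref{first} controls $\|f-g\|$ (the genuine dual norm), whereas the convexity estimates and the final operator-norm bound want $\|\cdot\|_D$, and one must keep $\sup_D x^*_n$ bounded away from $0$ (here is where smallness of $\epsilon$ relative to $\|T\|_D$ re-enters) so that the geometric ratios stay below $1$. Making the constants come out as the stated $4\epsilon^2/(1-\epsilon)$ and $\epsilon/(1-\epsilon)$, rather than just $O(\epsilon^2)$ and $O(\epsilon)$, will require choosing the per-stage perturbation sizes as a clean geometric sequence $\epsilon^{n}\cdot(\text{const})$ and summing carefully; I would set up the induction so that each bound is explicit from the start rather than optimizing at the end.
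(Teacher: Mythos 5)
There is a genuine gap in your final step, and it is not the bookkeeping issue you flag. Your plan reduces to the scalar case: you fix one functional $y^*\in S_{Y^*}$ at the outset, run the iteration on $x^*=T^*y^*$ to obtain $\tilde x^*$ attaining its supremum on $D$ at $z$, and then set $S=T+(\tilde x^*-x^*)\otimes y$. The chain $\|Sz\|\ge |y^*(Sz)|=\tilde x^*(z)=\|\tilde x^*\|_D\ge\|S\|_D$ breaks at the last inequality: there is no reason why $\|S\|_D\le\|\tilde x^*\|_D$. The rank-one perturbation only modifies the component of $Tw$ along the single direction $y$, while $\sup_{w\in D}\|Sw\|$ can be governed by directions of $Y$ that $y^*$ does not see. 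Concretely, take $Y=\ell_\infty^2$, $y=e_1$, $y^*=e_1^*$ and $Tw=(x^*(w),g(w))$ with $\|g\|_D=1$ not attained on $D$; then $Sw=(\tilde x^*(w),g(w))$ and $\|S\|_D=\max\{\|\tilde x^*\|_D,\|g\|_D\}$, so if $\|\tilde x^*\|_D<1$ (which your estimates permit, since $\tilde x^*$ is only guaranteed close to $x^*$) the operator $S$ fails to attain its supremum on $D$ at $z$, or at all. This single-functional reduction is exactly what succeeds for range spaces with property $(\beta)$ (Theorem \ref{beta}), where condition (iii) controls all the remaining directions of $Y$, but it cannot work for arbitrary $Y$.

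The paper's proof avoids this by iterating on the operators themselves rather than on a fixed scalar functional: it builds $T_{k+1}(x)=T_k(x)+\epsilon^{k+1}f_k(T_kx)\,T_kx_k$, where at every stage a \emph{new} functional $f_k\in S_{Y^*}$ is chosen to norm $T_kx_k$, and the perturbation pushes $T_k$ up in the direction of the vector $T_kx_k$ itself. The hypothesis $\delta_D(\epsilon)>0$ then forces $\rho_D(x_{k+1}-x_k)\le\epsilon^k$, so both $(T_k)$ and $(x_k)$ converge, and the limit operator attains $\|S\|_D$ at $z$ genuinely because the norming direction in $Y$ is allowed to move with $k$. Your scalar iteration (essentially Theorem \ref{first} combined with the convexity estimate) is sound as far as it goes, but to repair the argument you would have to replace the final rank-one correction by this operator-level scheme.
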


\begin{proof}  Let $T_1 = T$. Choose $f_1\in S_{Y^{*}}$ so that
$$
f_1 (T_1 x_1)=\|T_1 x_1\|>\|T_1\|_{D}-\epsilon^{3}\delta_{D}(\epsilon).
$$
Inductively choose $\{T_{k} \}_{k=2}^{\infty}$, $\{x_k \}_{k=2}^{\infty} \subseteq D$, and $\{f_k\}_{k=2}^{\infty} \subseteq S_{Y^{*}}$ satisfying
$$
T_{k}(x)=T_{k-1}(x)+\epsilon^{k}f_{k-1}(T_{k-1}x)T_{k-1}x_{k-1},
$$
$$
\|T_{k}x_{k}\|>\|T_{k}\|_{D}-\epsilon^{k+2}\delta_{D}(\epsilon^{k}),~~\rho_{D}(x_{k})=1,
$$
$$
f_{k-1}(T_{k-1}x_{k})=|f_{k-1}(T_{k-1}x_{k})|,
$$
$$
and~~~f_{k}(T_{k}x_{k})=\|T_{k}x_{k}\|.
$$
Since $\|T_{k}\|<2$ and $\|T_{k+1}-T_{k}\| \leq 2 \epsilon^{k+1} \|T_{k}\|$ for every $k$, $\{T_{k}\}_{k=1}^{\infty}$ converges to $S$ and $\|T-S\|\leq\frac{4\epsilon^2}{1-\epsilon}$.
An upper bound of $\|T_{k+1}\|_{D}$ is
\begin{eqnarray*}
\|T_{k+1}\|_{D} &<&
\|T_{k+1}x_{k+1}\|+\epsilon^{k+3}\delta_{D}(\epsilon^{k+1})
\\
&\leq& \|T_{k}\|_{D}+\epsilon^{k+1} |f_{k}(T_{k}x_{k+1})|\cdot \|T_{k}\|_{D}+\epsilon^{k+3}\delta_{D}(\epsilon^{k+1}).
\end{eqnarray*}

A lower bound is
\begin{eqnarray*}
\|T_{k+1}\|_{D} &\geq&
\|T_{k+1}x_{k}\|=\|T_{k}x_{k}\| \cdot |1+\epsilon^{k+1}f_{k}(T_{k}x_{k})|
\\
&>& (\|T_{k}\|_{D}-\epsilon^{k+2}\delta_{D}(\epsilon^{k}))(1+\epsilon^{k+1}(\|T_{k}\|_{D}-\epsilon^{k+2}\delta_{D}(\epsilon^{k}) ))
\\
&>& \|T_{k}\|_{D}+\epsilon^{k+1}\|T_{k}\|_{D}^{2}-2\epsilon^{2k+3}\delta_{D}(\epsilon^{k})\|T_{k}\|_{D}-\epsilon^{k+2}\delta_{D}(\epsilon^{k}).
\end{eqnarray*}
Combining these two bounds yields
$$
|f_{k}(T_{k}x_{k+1})| > \|T_{k}\|_{D}-2\epsilon^{k+2}\delta_{D}(\epsilon^{k})-\frac{\epsilon\delta_{D}(\epsilon^{k})+\epsilon^{2}\delta_{D}(\epsilon^{k+1})}{\|T_{k}\|_{D}}.
$$
Since $\delta_{D}(\epsilon^{k})\geq \delta_{D}(\epsilon^{k+1})$ and $\|Tx\|\leq \rho_{D}(x)~\|T\|_{D}$ for every $x\in D$, we have
\begin{eqnarray*}
\rho_{D}\left( \frac{x_{k+1}+x_{k}}{2}\right)\|T_{k}\|_{D} &\geq&
\left\|T_{k}\left(\frac{x_{k+1}+x_{k}}{2} \right) \right\|
\geq \frac{1}{2}Re(f_{k}T_{k}x_{k+1}+f_{k}T_{k}x_{k})
\\
&\geq& \|T_{k}\|_{D}-\frac{1}{2}\left( 2\epsilon^{k+2}\delta_{D}(\epsilon^{k})+\frac{\epsilon\delta_{D}(\epsilon^{k})+\epsilon^{2}\delta_{D}(\epsilon^{k+1})}{\|T_{k}\|_{D}}
+\epsilon^{k+2}\delta_{D}(\epsilon^{k})  \right)
\\
&\geq& \|T_{k}\|_{D}-\delta_{D}(\epsilon^{k})\left( \frac{3}{2}\epsilon^{k+2}+\frac{\epsilon+\epsilon^{2}}{2\|T_{k}\|_{D}} \right).
\end{eqnarray*}

It follows from $\|T_{k}-T\|<\frac{4\epsilon^2}{1-\epsilon}<4\epsilon$,
$$
\|T\|_{D}-4\epsilon<\|T_{k}\|_{D}<\|T\|_{D}+4\epsilon
$$
$$
\rho_{D}\left(\frac{x_{k+1}+x_{k}}{2}\right) > 1- \delta_{D}(\epsilon^{k})\left( \frac{\frac{3}{2}\epsilon^{k+2}+\frac{\epsilon+\epsilon^{2}}{2\|T\|_{D}-8\epsilon}}{\|T\|_{D}-4\epsilon} \right).
$$
Since $\epsilon$ is sufficiently small relatively to $\|T\|_{D}$, then we have
\begin{eqnarray*}
\rho_{D}\left(\frac{x_{k+1}+x_{k}}{2}\right) &>& 1- \delta_{D}(\epsilon^{k})
\\
&=&\frac{1}{2}\rho_{D}(x_{k+1})+\frac{1}{2}\rho_{D}(x_{k}) - \delta_{D}(\epsilon^{k}),
\end{eqnarray*}
 which implies that $\rho_D(x_{k+1}-x_{k})\leq\epsilon^{k}$. Hence $\{x_{k} \}_{k=1}^{\infty}$ converges in norm to $z \in D$. We can also see easily that $\|x_1 -z\| \leq \rho_D(x_1-z)\leq\frac{\epsilon}{1-\epsilon}$, $\|T-S\|\leq \frac{4\epsilon^2}{1-\epsilon}$ and $\|Sz\|=\|S\|_{D}$.

\end{proof}

\begin{corollary} [\cite{KimLee}]
Let $X$ be a uniformly convex Banach space and $Y$ be a Banach space. Then $(X,Y)$ has the \emph{BPBp} on $B_X$.
\end{corollary}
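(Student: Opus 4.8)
The plan is to deduce the corollary directly from the preceding theorem by taking $D=B_X$. First I would record the elementary observations that unlock the hypotheses of that theorem in this case: $B_X$ is a bounded closed absorbing convex subset of $X$, its Minkowski functional is $\rho_{B_X}=\|\cdot\|_X$, and therefore $\delta_{B_X}(\epsilon)$ is nothing but the usual modulus of convexity $\delta_X(\epsilon)$, which is strictly positive for every $\epsilon\in(0,2)$ exactly because $X$ is uniformly convex. Moreover, for any $T\in S_{\mathcal L(X,Y)}$ we have $\|T\|_{B_X}=\|T\|=1$, so the clause ``$\epsilon$ sufficiently small relatively to $\|T\|_D$'' in the theorem collapses to a restriction on $\epsilon$ alone that is \emph{uniform} in $T$; this uniformity is precisely what the BPBp requires of its modulus $\eta$.

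Next I would set up the parameters. Given the target accuracy $\epsilon_0>0$ in the definition of the BPBp on $B_X$, choose $\epsilon\in(0,1)$ small enough that simultaneously: $\epsilon$ lies below the threshold imposed by the preceding theorem (with $\|T\|_D=1$); $\frac{8\epsilon^2}{1-\epsilon}<\epsilon_0$; and $\frac{\epsilon}{1-\epsilon}<\epsilon_0$. Then declare $\eta_{B_X}(\epsilon_0):=\epsilon^{3}\delta_X(\epsilon)$, which is positive by uniform convexity. Now, given $T\in S_{\mathcal L(X,Y)}$ and $x_1\in B_X$ with $\|Tx_1\|>1-\eta_{B_X}(\epsilon_0)=\|T\|_{B_X}-\epsilon^{3}\delta_{B_X}(\epsilon)$, the preceding theorem produces $S\in\mathcal L(X,Y)$ and $z\in B_X$ with $\|Sz\|=\|S\|_{B_X}=\|S\|$, $\|S-T\|<\frac{4\epsilon^2}{1-\epsilon}$ and $\|x_1-z\|\le\rho_{B_X}(x_1-z)<\frac{\epsilon}{1-\epsilon}<\epsilon_0$.

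The last step is a renormalization, needed because the theorem hands back an operator with $\|Sz\|=\|S\|_D$ rather than one of norm one. Since $\|S\|=\|Sz\|\ge\|Tx_1\|-\|S-T\|>0$, I would set $\hat S:=S/\|S\|$; then $\|\hat S\|=\|\hat S\|_{B_X}=1=\|\hat S z\|$, and from $\bigl|\,\|S\|-1\,\bigr|=\bigl|\,\|S\|-\|T\|\,\bigr|\le\|S-T\|$ one gets $\|T-\hat S\|\le\|T-S\|+\bigl|\,\|S\|-1\,\bigr|\le 2\|T-S\|<\frac{8\epsilon^2}{1-\epsilon}<\epsilon_0$. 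Together with $\|x_1-z\|<\epsilon_0$, the pair $(\hat S,z)$ witnesses the BPBp on $B_X$ with modulus $\eta_{B_X}$, which is what the corollary asserts.

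I do not anticipate a real obstacle: the whole argument is a specialization of a result already in hand. The two places that merit a line of care are (a) confirming that ``sufficiently small relatively to $\|T\|_D$'' yields a $T$-independent bound on $\epsilon$ once $D=B_X$ (so that $\eta$ can legitimately be chosen uniformly, as the definition demands), and (b) the renormalization estimate above; both are routine.
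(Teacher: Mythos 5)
Your derivation is the intended one: the paper offers no separate argument for this corollary, which is meant to follow by taking $D=B_X$ in the preceding theorem, and your parameter bookkeeping and the final renormalization $\hat S=S/\|S\|$ (needed because the theorem only delivers $\|Sz\|=\|S\|_D$, not $\|S\|_D=1$) are both correct. Your observation that ``$\epsilon$ sufficiently small relatively to $\|T\|_D$'' becomes a $T$-independent threshold once $\|T\|_{B_X}=\|T\|=1$ is also exactly the point that makes the resulting $\eta$ a legitimate uniform modulus.

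The one step you call routine that is not is the identification $\delta_{B_X}=\delta_X$. With the paper's literal definition
$$
\delta_{D}(\epsilon)=\inf\left\{\tfrac{1}{2}\rho_{D}(x)+\tfrac{1}{2}\rho_{D}(y)-\rho_{D}\left(\tfrac{x+y}{2}\right) : x,y\in D,\ \rho_{D}(x-y)\geq\epsilon\right\},
$$
one has $\delta_{B_X}(\epsilon)=0$ in \emph{every} Banach space: take $\|x\|=1$ and $y=(1-\epsilon)x$, so that $\rho_{B_X}(x-y)=\epsilon$ while $\tfrac12\|x\|+\tfrac12\|y\|-\|\tfrac{x+y}{2}\|=0$. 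Read literally, the hypothesis $\delta_D(\epsilon)>0$ is therefore never satisfied by $D=B_X$, and your chain breaks at its first link. The defect is inherited from the paper rather than introduced by you: in the proof of the theorem the convexity estimate is only ever applied to the points $x_k$, all of which satisfy $\rho_D(x_k)=1$, so the infimum is implicitly taken over $\rho_D(x)=\rho_D(y)=1$; in that case the quantity reduces to $1-\|\tfrac{x+y}{2}\|$ and $\delta_{B_X}$ becomes the modulus of convexity computed on the unit sphere, which is positive for all $0<\epsilon<2$ precisely when $X$ is uniformly convex. Add one sentence making that restriction (or invoking the equivalence of the sphere and ball versions of the modulus) and your proof is complete and coincides with the paper's.
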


\begin{remark} \label{remark}
It is easy to notice that the \emph{BPBp} is an isometric property, but not isomorphic. On the other hand, the \emph{BPBp} still holds on the image of an isomorphism in the following sense.
Let $\Psi$ be an isomorphism from a Banach space $X$ into a Banach space $Z$. We can see that if $(X,Y)$ has the \emph{BPBp} on $B_X$, then $(\Psi(X),Y)$ has the \emph{BPBp} on $D=\Psi(B_X)$. Further, if $Y$ is injective, then $(Z,Y)$ has the \emph{BPBp} on $D$. We recall that a Banach space $Z$ is called injective if for every Banach space $X$ and for every subspace $W$ of $X$, every operator from $W$ into $Z$ can be extended to an operator from $X$ into $Z$ preserving its norm.
\end{remark}

A Banach space $X$ is called an Asplund space if the set of all points of $U$ where $f$ is Fr\'echet differentiable is dense $G_{\delta}$-subset of $U$ for
every real-valued convex continuous function $f$ defined on an open convex subset $U \subseteq X$. Equivalently every $w^{*}$-compact subset of $(X^{*},w^{*})$ is $\|\cdot\|$-fragmentable.
Here we say a subset $C$ of $(X^{*},w^{*})$ is $\|\cdot\|$-fragmentable if for every nonempty bounded subset $A \subset C$ and for every $\epsilon>0$, there is a nonempty $w^{*}$-open neighborhood $V \subset X^{*}$ such that $A \cap V$ is nonempty and has $\|\cdot\|$-diameter less than $\epsilon$ (\cite{FHHMZ}).
Recall that an operator $T\in \mathcal{L}(X,Y)$ is called by an Asplund operator if it factors through an Asplund space. That is, there are an Asplund space $Z$
and operators $T_1\in \mathcal{L}(X,Z)$ and $T_2\in \mathcal{L}(Z,Y)$ such that $T=T_{2}\circ T_{1}$. For example, every weakly compact operator is an Asplund operator since it factors through a reflexive space, so that a rank one operator is an Asplund operator. We also note that the family of Asplund operators is an operator ideal, hence the sum of two Asplund operators or the composition of an operator with an Asplund operator is again an Asplund operator.

It was shown in \cite{ACK} that the \emph{BPBp} on $B_X$ holds for an Asplund operator from $X$ into $C_{0}(L)$. We can extend this result to a symmetric bounded closed convex subset $D \subset B_X$.
Some modifications of \cite[Lemma 2.3 and Theorem 2.4]{ACK} are just needed, but we give the details for the sake of completeness.
\begin{lemma} \label{asp1}
Let $D$ be a symmetric bounded closed convex subset of $B_X$ and $T\in \mathcal{L}(X,Y)$ be an Asplund operator with $\|T\|_{D}=1$ and $\|T\| \geq M\geq 1$.
If
$$
\|T(x_0)\|>1-\frac{\epsilon^2}{4M},
$$
for some $x_0 \in D$,
then for every norming set $B \subseteq B_{Y^{*}}$ and $0<\epsilon\leq\frac{M}{2}$, there exist $x^{*}\in X^{*}$, $u_0\in D$ and a $w^{*}$-open neighborhood $U$ in $X^*$ such that
$$
|x^{*}(u_0)|=1=\|x^{*}\|_{D},~~~\|x_0-u_0\|<\epsilon~~~and~~~\|z^{*}-x^{*}\|<4\epsilon,
$$
for every $z^{*}\in U\cap T^{*}(B)$.
\end{lemma}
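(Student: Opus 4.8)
The plan is to marry the Asplund (fragmentability) property of $T$ with the functional form of Ekeland's principle, Theorem \ref{first}. Since $B$ is norming we have $\|Tx_0\|=\sup_{f\in B}|f(Tx_0)|>1-\epsilon^2/(4M)$; after replacing $B$ by $B\cup(-B)$ if necessary we may fix $f_0\in B$ with $f_0(Tx_0)>1-\epsilon^2/(4M)$, and then $\psi_0:=T^{*}f_0\in X^{*}$ satisfies $\|\psi_0\|_D\leq\|T\|_D=1$ and $\psi_0(x_0)>1-\epsilon^2/(4M)$, so $\psi_0$ almost attains its supremum over $D$ at $x_0$. We then (a) use fragmentability to slide $\psi_0$ to a nearby point $\psi_1=T^{*}f_1$ of $T^{*}(B)$ possessing a $w^{*}$-open neighbourhood $U$ on which $T^{*}(B)$ has small $\|\cdot\|$-diameter, and (b) use Theorem \ref{first} together with a rescaling to replace $\psi_1$ by a genuine $\|\cdot\|_D$-norming functional $x^{*}$ close to $\psi_1$, attaining at a point $u_0$ near $x_0$. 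The neighbourhood $U$ then works on two fronts: $\psi_1\in U\cap T^{*}(B)$ keeps it non-trivial, and the small diameter of $T^{*}(B)\cap U$ transfers closeness from $\psi_1$ to $x^{*}$.

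For (a): the set $C:=T^{*}(B)$ is a bounded subset of the $w^{*}$-compact, $\|\cdot\|$-fragmentable set $T^{*}(B_{Y^{*}})$ — fragmentable precisely because $T$ is an Asplund operator — hence $C$ itself is $\|\cdot\|$-fragmented by the $w^{*}$-topology. Apply the definition of fragmentation to the nonempty bounded set $C\cap W$, where $W:=\{\psi\in X^{*}:\psi(x_0)>1-\epsilon^2/(4M)\}$ is $w^{*}$-open and contains $\psi_0$: this yields a $w^{*}$-open $V$ with $C\cap W\cap V\neq\emptyset$ and $\|\cdot\|$-$\mathrm{diam}(C\cap W\cap V)<\epsilon$. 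Set $U:=W\cap V$ and pick $\psi_1=T^{*}f_1\in C\cap U$, $f_1\in B$. Then $\psi_1(x_0)>1-\epsilon^2/(4M)$, $\|\psi_1\|_D\leq1$, and $\|z^{*}-\psi_1\|<\epsilon$ for every $z^{*}\in U\cap T^{*}(B)$.

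For (b): since $D$ is symmetric, $\sup_D h=\|h\|_D$ for all $h\in X^{*}$, so attaining $\sup_D$ is the same as attaining $\sup_D|h|$. From $\psi_1(x_0)>1-\epsilon^2/(4M)\geq\|\psi_1\|_D-\epsilon^2/(4M)$ we apply Theorem \ref{first} with $\delta=\epsilon^2/(2M)$ and a parameter $\epsilon'$ chosen just above $\epsilon/(2M)$, obtaining $g\in X^{*}$ and $u_0\in D$ with $g(u_0)=\|g\|_D$, $\|g-\psi_1\|\leq\epsilon'$ and $\|u_0-x_0\|\leq\delta/\epsilon'<\epsilon$. Now $\|\psi_1\|_D$, hence $\|g\|_D$, differs from $1$ by only $O(\epsilon/M)$, so setting $x^{*}:=g/\|g\|_D$ gives $\|x^{*}\|_D=1$ and $|x^{*}(u_0)|=\|x^{*}\|_D=1$; a direct estimate using $\|g\|\leq\|\psi_1\|+\epsilon'\leq\|T\|+\epsilon'$ and $\|g\|_D$ near $1$ bounds $\|x^{*}-g\|$, hence $\|x^{*}-\psi_1\|$. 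Combining with (a): $\|z^{*}-x^{*}\|\leq\|z^{*}-\psi_1\|+\|\psi_1-x^{*}\|<4\epsilon$ for every $z^{*}\in U\cap T^{*}(B)$, which together with $\|x_0-u_0\|<\epsilon$ is the assertion.

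The hard part is the renormalization in step (b): one has to produce a single $x^{*}$ that is at once $\|\cdot\|_D$-norming with $\|x^{*}\|_D=1$ \emph{exactly}, close to $\psi_1$ in the norm of $X^{*}$ (not merely on $D$), and attaining at a point within $\epsilon$ of $x_0$ — and these requirements fight each other, because passing from $g$ to $g/\|g\|_D$ displaces $g$ by roughly $\|g\|\cdot|1-\|g\|_D|$, while $\|g\|\approx\|\psi_1\|$ can be as large as $\|T\|$ (far exceeding the $D$-norm $1$) and $|1-\|g\|_D|$ cannot be pushed below $O(\epsilon/M)$ without violating $\|u_0-x_0\|<\epsilon$. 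This is exactly why the hypotheses carry the parameter $M$ with $\|T\|\geq M\geq1$, the sharpened closeness $\|Tx_0\|>1-\epsilon^2/(4M)$, and the restriction $\epsilon\leq M/2$: one applies the lemma with $M=\|T\|$, so $\|T\|/M=1$ and the estimate in (b) closes safely under $4\epsilon$. The remaining points to verify are routine: that $\psi_0\neq0$ so the scalings are legitimate; that $x_0$, being unchanged, still lies in $D$; and that symmetry of $D$ is genuinely invoked (to identify $\sup_D$ with $\|\cdot\|_D$), since the Asplund--$C_0(L)$ argument this lemma feeds needs the absolute value throughout.
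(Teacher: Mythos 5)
Your proposal is correct and follows essentially the same route as the paper: locate a point of $T^{*}(B)$ in the $w^{*}$-slice determined by $x_0$, shrink that slice by fragmentability to get $U$ with small $\|\cdot\|$-diameter, apply the functional Bishop--Phelps--Bollob\'as result (the paper invokes Theorem \ref{main}, whose symmetric case is exactly your use of Theorem \ref{first}) to a point of $U\cap T^{*}(B)$, and renormalize by $\|g\|_D$, with the hypotheses on $M$ and $\epsilon\leq M/2$ absorbing the displacement $\|g\|\cdot|1-\|g\|_D|$ exactly as you describe. The one deviation worth noting: the paper defines the slice by $|z^{*}(x_0)|>1-\epsilon^{2}/(4M)$ instead of replacing $B$ by $B\cup(-B)$, which keeps $U\cap T^{*}(B)$ nonempty for the \emph{original} norming set --- a fact the proof of Theorem \ref{asp} subsequently relies on, so your variant should adopt that formulation.
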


\begin{proof}
Since $B$ is a norming set, we can choose $y^{*}_{0}\in B$ such that
$$
|y^{*}(Tx_0)|=|T^{*}y^{*}_{0}(x_0)|>1-\frac{\epsilon^2}{4M}.
$$
Define a $w^{*}$-open neighborhood in $X^{*}$ by
$$
U_1=\left\{z^{*}\in X^{*}:|z^{*}(x_0)|>1-\frac{\epsilon^2}{4M}\right\}.
$$
Since $T^{*}y^{*}_0\in T^{*}(B)\cap U_1$, we get $T^{*}(B)\cap U_1\neq\emptyset$.
Since $T^{*}(B)$ is $\|\cdot\|$-fragmentable, (\cite{ACK}), we can find a $w^{*}$-open neighborhood $U_2 \subset X^{*}$ such that
$$
U \cap T^{*}(B) \neq \emptyset ~~~and~~~diam(U\cap T^{*}(B))<\epsilon,
$$
where $U=U_1\cap U_2$.
Now fix $z^{*}_0\in U\cap T^{*}(B)$. Write $z^{*}_0=T^{*}(w^{*}_0)$ for some $w^{*}_0\in B\subset B_{Y^{*}}$.
We can see $\|z_{0}^{*}\| \leq M$ and
$$
\|z^{*}_0\|_{D}=\sup_{x\in D}|T^{*}w^{*}_0(x)|=\sup_{x\in D}|w^{*}_0(Tx)|\leq \|T\|_{D}\leq1,
$$
which implies that $|z^{*}_0(x_0)|>1-\frac{\epsilon^2}{4M}\geq \|z^{*}_0\|_{D}-\frac{\epsilon^2}{4M}$.
By Theorem \ref{main}, we can choose $x^{*}\in X^{*}$ and $u_0\in D$ such that
$$
\|x^{*}-z^{*}_0\|<\frac{\epsilon}{2M},~~~\|x_0-u_0\|<\epsilon~~~and~~~|x^{*}(u_0)|=\|x^{*}\|_{D}.
$$
It also follows from an easy computation that
$$
1-\frac{\epsilon}{M}\leq \|x^{*}\|_{D}\leq 1+\frac{\epsilon}{2M}.
$$
Let $k=\frac{1}{\|x^*\|_D}$. Clearly,
$$
\frac{2M}{2M+\epsilon}\leq k \leq \frac{M}{M-\epsilon}
$$
and
\[ \begin{array}{lcl}
\|kx^{*}-z^{*}_0\| &\leq& \|kx^{*}-x^{*}\|+\|x^{*}-z^{*}_0\|\leq (k-1)\|x^{*}\|+\frac{\epsilon}{2M}\\[5pt]
&\leq& \frac{\epsilon}{M-\epsilon}\left(M+\frac{\epsilon}{2M}\right)+\frac{\epsilon}{2M}\leq 2\epsilon+\frac{\epsilon}{2}+\frac{\epsilon}{2}\leq 3\epsilon,
\end{array} \]
Since $|kx^{*}(u_0)|=1$ from the choice of $k$,
$kx^{*}$ satisfies the desired properties. Moreover,
$$
\|kx^{*}-z^{*}\|\leq 3\epsilon+\epsilon=4\epsilon,
$$
for every $z^{*} \in U\cap T^{*}(B)$.
\end{proof}

\begin{theorem} \label{asp}
For a symmetric bounded closed convex subset $D\subseteq B_X$ and a locally compact Hausdorff space $L$,
let $T:X\rightarrow C_{0}(L)$ be an Asplund operator with $\|T\|_{D}=1$ and $\|T\|_D =M \geq 1$. Given $0<\epsilon<\frac{M}{2}$, if $x_0\in D$ satisfies that
$$
\|Tx_0\|>1-\frac{\epsilon^2}{4M},
$$
then there exist an Asplund operator $S:X\rightarrow C_{0}(L)$ and a point $u_0\in D$ such that
$$
\|S\|_{D}=\|Su_0\|=1,~~~\|x_0-u_0\|<\epsilon~~~and~~~\|T-S\|<4\epsilon.
$$
\end{theorem}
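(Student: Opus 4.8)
The plan is to derive the theorem from Lemma \ref{asp1}, applied to the norming set of evaluation functionals of $C_0(L)$, and then to build $S$ from $T$ by a Urysohn patching.

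First I would set $B=\{\pm\delta_t:t\in L\}\subseteq B_{C_0(L)^{*}}$, which is a norming set, and invoke Lemma \ref{asp1}. This produces $x^{*}\in X^{*}$, $u_0\in D$ and a $w^{*}$-open set $U\subseteq X^{*}$ with
$$
|x^{*}(u_0)|=\|x^{*}\|_{D}=1,\qquad \|x_0-u_0\|<\epsilon,\qquad \|z^{*}-x^{*}\|<4\epsilon\ \text{ for all } z^{*}\in U\cap T^{*}(B).
$$
Since $B=-B$, hence $T^{*}(B)=-T^{*}(B)$, replacing $(x^{*},U)$ by $(-x^{*},-U)$ if necessary I may assume $x^{*}(u_0)=1$. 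Because $(T^{*}\delta_t)(x)=(Tx)(t)$ with $Tx\in C_0(L)$, the map $t\mapsto T^{*}\delta_t$ is continuous from $L$ into $(X^{*},w^{*})$, so $A:=\{t\in L:T^{*}\delta_t\in U\}$ and $A':=\{t\in L:-T^{*}\delta_t\in U\}$ are open in $L$, and $U\cap T^{*}(B)\neq\emptyset$ forces $A\cup A'\neq\emptyset$. I will carry out the case $A\neq\emptyset$ (the other is identical after one extra sign change) and fix $s_0\in A$.

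Next, by Urysohn's lemma for locally compact Hausdorff spaces I would choose a continuous $\varphi:L\to[0,1]$ with compact support contained in $A$ and $\varphi(s_0)=1$, and define $S\in\mathcal{L}(X,C_0(L))$ by
$$
(Sx)(t)=(1-\varphi(t))(Tx)(t)+\varphi(t)\,x^{*}(x).
$$
Then $Sx\in C_0(L)$ (as $\varphi$ has compact support), $S$ is linear and bounded, and $S$ is an Asplund operator: indeed $S-T$ is the sum of the composition of $T$ with multiplication by $-\varphi$ on $C_0(L)$ (Asplund, since $T$ is and the Asplund operators form an operator ideal) and the rank-one operator $x\mapsto x^{*}(x)\,\varphi$. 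The estimates then run as follows. For $t$ in the support of $\varphi$ one has $T^{*}\delta_t\in U\cap T^{*}(B)$, so $|(Sx-Tx)(t)|=\varphi(t)\,|(x^{*}-T^{*}\delta_t)(x)|\le 4\epsilon\|x\|$; since the support is compact this bound is uniform and strictly below $4\epsilon$, whence $\|S-T\|<4\epsilon$. For $x\in D$ one has $|(Sx)(t)|\le(1-\varphi(t))\|T\|_{D}+\varphi(t)\|x^{*}\|_{D}=1$, so $\|S\|_{D}\le1$; and $(Su_0)(s_0)=x^{*}(u_0)=1$ gives $\|Su_0\|\ge1$. As $u_0\in D$, this forces $\|S\|_{D}=\|Su_0\|=1$, and together with $\|x_0-u_0\|<\epsilon$ from Lemma \ref{asp1} the proof is complete.

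I expect the two delicate points to be these. The convex-combination shape of $(Sx)(t)$ is exactly what keeps $\|S\|_{D}\le1$, and this genuinely needs Lemma \ref{asp1} to supply $\|x^{*}\|_{D}=1$ rather than merely $\|x^{*}\|\le1$ — which is also where the symmetry of $D$ enters, through Theorem \ref{main}. Second, upgrading the pointwise bound $\|T^{*}\delta_t-x^{*}\|<4\epsilon$ on $A$ to a uniform strict bound over the compact support of $\varphi$ requires a little care; it follows from compactness of that support together with, if necessary, shrinking the $w^{*}$-neighbourhood produced inside Lemma \ref{asp1} so that its estimate remains a definite amount below $4\epsilon$.
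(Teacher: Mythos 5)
Your proposal is correct and follows essentially the same route as the paper: apply Lemma \ref{asp1} to the norming set of evaluation functionals, pull the $w^{*}$-open set $U$ back to an open subset of $L$ via the $w^{*}$-continuous map $t\mapsto T^{*}\delta_t$, and patch with a Urysohn function to form the convex combination $(1-\varphi)Tx+\varphi\,x^{*}(x)$ (the paper's displayed coefficient $(1+f(s))$ is evidently a typo for $(1-f(s))$, since its own norm estimates use exactly your form). Your two ``delicate points'' are both already handled by Lemma \ref{asp1} as stated --- it supplies $\|x^{*}\|_{D}=1$ and the bound $\|z^{*}-x^{*}\|<4\epsilon$ uniformly over all of $U\cap T^{*}(B)$ --- so no further shrinking of $U$ is needed.
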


\begin{proof}
Define $\delta:L \rightarrow C_{0}(L)^{*}$ by $\delta(s)(f)=f(s)$ for $f\in C_{0}(L)$ and $s\in L$. It is easy to check that $\phi =T^{*}\circ\delta : L \rightarrow X^{*}$ is $w^{*}$-continuous.
Since $\{\delta(s):s\in L\}$ is a norming set in $B_{C_{0}^{*}(L)}$, we can find a $w^{*}$-open neighborhood $U$ and $x^{*}\in X^{*}$ by Lemma \ref{asp1}. Here we have $T(x)(s)=\phi(s)(x)$ and $T^{*}(B)=\phi(L)$.
Since $U\cap T^{*}(B)\neq\emptyset$, there is $s_0\in L$ such that $\phi(s_0)\in U$. Consider the set
$$
W=\{s\in L:\phi(s)\in U \},
$$
which is an open neighborhood of $s_0$ due to the $w^{*}$-continuity of $\phi$.
By Urysohn's lemma there exists a continuous function $f:L\rightarrow [0,1]$ satisfying
$$
f(s_0)=1~~~and~~~supp(f)\subset W.
$$
Define a linear operator $S:X\rightarrow C_{0}(L)$ by
$$
S(x)(s)=f(s)x^{*}(x)+(1+f(s))T(x)(s).
$$
Define $S_2\in \mathcal{L}(C_{0}(L),C_{0}(L))$ by $S_2(h)=(1+f)h$. Then $S(x)=f\cdot x^{*}(x)+S_2 (T(x))$,
hence $S$ is an Asplund operator.
It follows easily that $\|S\|_{D}\leq1$ and $|S(u_0)(s_0)|=|y^{*}(u_0)|=1$, which shows that $S$ attains its supremum at $u_0$ on $D$ and $\|u_0-x_0\|<\epsilon.$
For an upper bound of $\|T-S\|$,
\[ \begin{array}{lcl}
\|T-S\| &=& \sup_{x\in B_X}\|Tx-Sx\|=\sup_{x\in B_X}\sup_{s\in L}|f(s)|\cdot|T(x)(s)-x^{*}(x)|\\[5pt]
&=& \sup_{s\in W}\sup_{x\in D}|\phi(s)(x)-x^{*}(x)|\leq\sup_{s\in W}\|\phi(s)-x^{*}\|\leq 4\epsilon,
\end{array} \]
where the last inequality is derived from $\phi(s)\in U\cap T^{*}(B)$ and Lemma \ref{asp1}. This completes the proof.
\end{proof}

\begin{corollary}
For any (real) Asplund space $X$ and any locally compact Hausdorff space $L$, the pair $(X,C_0 (L))$ has the \emph{BPBp} on a symmetric bounded closed convex set $D$ of $X$.
\end{corollary}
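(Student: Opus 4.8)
The plan is to derive this from Theorem~\ref{asp}, using the elementary fact that \emph{every} bounded operator out of an Asplund space is an Asplund operator. Indeed, if $X$ is Asplund and $T\in\mathcal{L}(X,C_0(L))$, then the factorization $T=T\circ\Id_X$ exhibits $T$ as passing through the Asplund space $X$ itself, so $T$ is an Asplund operator. Consequently Theorem~\ref{asp} applies to \emph{all} of $\mathcal{L}(X,C_0(L))$, and the only remaining task is to repackage its conclusion as a uniform modulus $\eta_D(\epsilon)$ in the sense of the \emph{BPBp} on $D$ (note $D$ is symmetric in both statements, so this is consistent).

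First I would normalize, as in the proofs of Theorems~\ref{beta} and~\ref{asp}, to the case $D\subseteq B_X$; since the \emph{BPBp} on $D$ depends on $D$ only up to a positive rescaling — which merely transforms the modulus $\eta_D$ and the target accuracy by fixed factors — this costs nothing. The gain from the normalization is the following observation: once $D\subseteq B_X$, any $T$ with $\|T\|_D=1$ automatically has $\|T\|\geq\|T\|_D=1$, so in applying Theorem~\ref{asp} we may always take $M=1$. The threshold $\epsilon^2/(4M)$ there then becomes $\epsilon^2/4$, which is independent of the individual operator; this is precisely what makes a uniform $\eta_D$ attainable.

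Now, given $0<\epsilon<2$, I would set $\eta_D(\epsilon)=\epsilon^2/64$. For any $T$ with $\|T\|_D=1$ and any $x_0\in D$ with $\|Tx_0\|>1-\eta_D(\epsilon)$, apply Theorem~\ref{asp} with $\epsilon/4$ in place of $\epsilon$ (legitimate, since $0<\epsilon/4<1/2=M/2$ and $1-\eta_D(\epsilon)=1-(\epsilon/4)^2/4$). One obtains an Asplund operator $S\in\mathcal{L}(X,C_0(L))$ and a point $u_0\in D$ with
$$
\|S\|_D=\|Su_0\|=1,\qquad \|x_0-u_0\|<\tfrac{\epsilon}{4}<\epsilon,\qquad \|T-S\|<4\cdot\tfrac{\epsilon}{4}=\epsilon,
$$
which is the \emph{BPBp} on $D$ for this $\epsilon$. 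For $\epsilon\geq 2$ one reuses the modulus obtained for $\epsilon=1$, since then $\|x_0-u_0\|<1\leq\epsilon$ and $\|T-S\|<1\leq\epsilon$; thus $\eta_D$ can be taken non-increasing on all of $(0,\infty)$.

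I do not expect a genuine obstacle: the substance is already contained in Lemma~\ref{asp1} and Theorem~\ref{asp}. The only point needing care is that Theorem~\ref{asp} a priori ties its estimates to the quantity $M=\|T\|$, which varies with $T$; the reduction $D\subseteq B_X$, forcing $\|T\|\geq 1$, is exactly what lets us fix $M=1$ and obtain a single $\eta_D(\epsilon)$ valid for all operators of unit $D$-norm. A secondary bookkeeping point is checking that the rescaling reduction replacing $D$ by $D/R$ (where $D\subseteq RB_X$) transforms $\eta_D$ and the accuracies in the claimed way, which is routine.
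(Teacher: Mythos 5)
Your proposal is correct and is essentially the paper's (implicit) argument: the corollary is stated as an immediate consequence of Theorem~\ref{asp}, the point being exactly that every bounded operator from an Asplund space factors through that Asplund space via the identity and is therefore an Asplund operator, after which the normalization $D\subseteq B_X$ lets one take $M=1$ and read off a uniform $\eta_D(\epsilon)$. Your extra bookkeeping (choosing $\eta_D(\epsilon)=\epsilon^2/64$ by applying the theorem with $\epsilon/4$, and handling $\epsilon\geq 2$) is a harmless elaboration of what the paper leaves unsaid.
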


\begin{remark}
By the remark after Theorem \ref{main}, without the symmetry of $D$ we can show only that there exists an Asplund operator $X \rightarrow C_{0}(L)$ such that
$$
u_0 \in D,~~~\|S\|_{D}=1=\|S(u_0)\|~~~and~~~\|T-S\|<4\epsilon,
$$
by modifying the proof of Lemma \ref{asp1} and by the first part of Theorem \ref{main}.
\end{remark}

\section{Stability of the Bishop-Phelps-Bollob\'as property on direct sums}

In order to compare the function $\eta(\epsilon)$ appearing in the definition of the \emph{BPBp} for different pairs $(X,Y)$, the notion of $\eta(X,Y)(\epsilon)$ was introduced in \cite{ACKLM}. We now generalize it to a bounded closed convex subset $D$ of $B_X$.

\begin{definition}
Let $X$ and $Y$ be (real or complex) Banach spaces. For a bounded closed convex subset $D \subseteq B_X$ and $T \in \mathcal{L}(X,Y)$,
$$
\Pi_{D}(X,Y)=\{(x,T):x\in D, \|T(x)\|=\|T\|_{D}=1 \}
$$
$$
\eta_{D}(X,Y)(\epsilon)=\inf\{1-\|Tx\|:x\in D, \|T\|_{D}=1, \dist((x,T),\Pi_{D}(X,Y))\geq\epsilon \},
$$
where $\dist((x,T),\Pi_{D}(X,Y))=\inf\{\max\{\|x-y\|,\|T-S\|\}: (y,S)\in \Pi_{D}(X,Y) \}$.
\end{definition}
It is clear the pair $(X,Y)$ has the \emph{BPBp} on $D$ if and only if $\eta_{D}(X,Y)(\epsilon)>0$ for every $0< \epsilon <1$. If a function $\epsilon \longmapsto \eta_{D}(\epsilon)$ is valid in the definition of the \emph{BPBp} on $D$ for the pair $(X,Y)$, then $\eta_{D}(\epsilon) \leq \eta_{D}(X,Y)(\epsilon)$. In other words, $\eta_{D}(X,Y)(\epsilon)$ is the largest function we can find to ensure that $(X,Y)$ has the \emph{BPBp} on $D$. It is clear that $\eta_{D}(X,Y)(\epsilon)$ is increasing with respect to $\epsilon$.

Let $\{X_i : i\in I\}$ and $\{Y_j :j \in J\}$ be families of Banach spaces, $X=(\bigoplus_{i\in I}X_{i})_{\ell_{1}}$ and
$Y=(\bigoplus_{j\in J}Y_{j})_{\ell_{\infty}}$.
Let $E_{i}$ and $F_{j}$ be the natural isometric embeddings of $X_{i}$ and $Y_{j}$ into $X$ and $Y$, respectively and let $P_{i}$ and $Q_{j}$ be the canonical
projections of norm one from $X$ and $Y$ onto $X_{i}$ and $Y_{j}$, respectively. For $D\subset X$ we let $D_{i}=\overline{P_{i}(D)}^{X_{i}}$ for each $i\in I$.

Pay\'a and Saleh \cite{PaySal} studied the denseness of norm attaining operators from the $\ell_1$-sum of domain space into the $\ell_{\infty}$-sum of range spaces. Their methods in \cite{PaySal} were applied in studying the Bishop-Phelps-Bollob\'as property for operators on those spaces
(\cite{ACKLM, ChoiKim}). With some suitable condition on $D$, we have the following analogous results to \cite[Theorem 2.1]{ACKLM}.

\begin{proposition}
Let $X=(\bigoplus_{i\in I}X_{i})_{\ell_{1}}$,
$Y=(\bigoplus_{j\in J}Y_{j})_{\ell_{\infty}}$ and $D$ be a bounded closed convex subset of $B_X$. Suppose that $D=\overline{co} (\cup E_i D_i)\subset B_{X}$. If the pair $(X,Y)$ has the \emph{BPBp} with $\eta_{D}(\epsilon)$ on $D$, then the pair $(X_{i},Y_{j})$ has the \emph{BPBp} on $D_{i}$ for every $i\in I$ and for every $j\in J$.
More precisely,
$$
\eta_{D}(X,Y)(\epsilon) \leq \eta_{D_{i}}(X_{i},Y_{j})(\epsilon),~~(i\in I,~j\in J).
$$
\end{proposition}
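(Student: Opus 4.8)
Fix $i\in I$ and $j\in J$. The plan is to prove the displayed inequality $\eta_{D}(X,Y)(\epsilon)\leq\eta_{D_{i}}(X_{i},Y_{j})(\epsilon)$ for every $0<\epsilon<1$; since $(X,Y)$ has the \emph{BPBp} on $D$ we have $\eta_{D}(X,Y)(\epsilon)>0$, and the inequality then forces $\eta_{D_{i}}(X_{i},Y_{j})(\epsilon)>0$, which is exactly the \emph{BPBp} for $(X_{i},Y_{j})$ on $D_{i}$. By the definition of $\eta_{D_{i}}(X_{i},Y_{j})(\epsilon)$ as an infimum, it suffices to prove the contrapositive statement: whenever $x\in D_{i}$, $\|T_{ij}\|_{D_{i}}=1$ and $1-\|T_{ij}x\|<\eta_{D}(X,Y)(\epsilon)$, one has $\dist\big((x,T_{ij}),\Pi_{D_{i}}(X_{i},Y_{j})\big)<\epsilon$.

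Given such a pair $(x,T_{ij})$, I would first lift it: set $T=F_{j}\,T_{ij}\,P_{i}\in\mathcal{L}(X,Y)$ and $x_{0}=E_{i}x$. Since $E_{i}D_{i}$ is one of the sets whose closed convex hull is $D$, we have $x_{0}\in E_{i}D_{i}\subseteq D$. As $F_{j}$ is an isometric embedding, $\|Td\|=\|T_{ij}P_{i}d\|$ for all $d$, so using $D_{i}=\overline{P_{i}(D)}^{X_{i}}$ one gets $\|T\|_{D}=\sup_{d\in D}\|T_{ij}P_{i}d\|=\|T_{ij}\|_{D_{i}}=1$ and $\|Tx_{0}\|=\|T_{ij}x\|$, hence $1-\|Tx_{0}\|<\eta_{D}(X,Y)(\epsilon)$. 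By the definition of $\eta_{D}(X,Y)(\epsilon)$ as the infimum of $1-\|Ry\|$ over all $y\in D$, $R\in\mathcal{L}(X,Y)$ with $\|R\|_{D}=1$ and $\dist\big((y,R),\Pi_{D}(X,Y)\big)\geq\epsilon$, the pair $(x_{0},T)$ cannot satisfy $\dist\big((x_{0},T),\Pi_{D}(X,Y)\big)\geq\epsilon$. So there exist $z_{0}\in D$ and $S\in\mathcal{L}(X,Y)$ with $\|Sz_{0}\|=\|S\|_{D}=1$ and $\max\{\|x_{0}-z_{0}\|,\|T-S\|\}<\epsilon$.

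Now I push $S$ back down. Write $S_{kj'}=Q_{j'}SE_{k}\in\mathcal{L}(X_{k},Y_{j'})$ and $T_{kj'}=Q_{j'}TE_{k}$. Because $Q_{j'}F_{j}$ is the identity on $Y_{j}$ when $j'=j$ and zero otherwise, and likewise $P_{i}E_{k}$ is the identity when $k=i$ and zero otherwise, we have $T_{kj'}=T_{ij}$ for $(k,j')=(i,j)$ and $T_{kj'}=0$ for all other $(k,j')$. Hence $\|S_{ij}-T_{ij}\|\leq\|S-T\|<\epsilon$, while for $(k,j')\neq(i,j)$ we get $\|S_{kj'}\|\leq\|S-T\|<\epsilon$ and so $\|S_{kj'}\|_{D_{k}}<\epsilon$, using $D_{k}\subseteq B_{X_{k}}$. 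The crucial step is to show that $z_{0}$ actually lies in $E_{i}D_{i}$. Writing $z_{0}$ as a norm-limit of finite convex combinations $z_{0}^{(n)}=\sum_{k}\lambda_{k}^{(n)}E_{k}d_{k}^{(n)}$ with $d_{k}^{(n)}\in D_{k}$, and using that $Y$ is an $\ell_{\infty}$-sum, one estimates $\|Sz_{0}^{(n)}\|=\sup_{j'}\big\|\sum_{k}\lambda_{k}^{(n)}S_{kj'}d_{k}^{(n)}\big\|\leq\sum_{k}\lambda_{k}^{(n)}M_{k}$, where $M_{k}=\sup_{j'}\|S_{kj'}\|_{D_{k}}$. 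Here $M_{i}\leq\|S\|_{D}=1$ (since $\|Q_{j'}SE_{i}w\|\leq\|SE_{i}w\|\leq\|S\|_{D}$ for $w\in D_{i}$) and $M_{k}\leq\epsilon$ for $k\neq i$, so $\|Sz_{0}^{(n)}\|\leq\epsilon+(1-\epsilon)\lambda_{i}^{(n)}$. Letting $n\to\infty$ and using continuity of $S$ with $\|Sz_{0}\|=1$ forces $\lambda_{i}^{(n)}\to1$; then the tail $\sum_{k\neq i}\lambda_{k}^{(n)}E_{k}d_{k}^{(n)}$ has norm at most $1-\lambda_{i}^{(n)}\to0$, so $E_{i}d_{i}^{(n)}\to z_{0}$, and since $E_{i}$ is an isometric embedding, $d_{i}^{(n)}\to w$ for some $w\in D_{i}$ with $E_{i}w=z_{0}$.

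Finally I read off the conclusion. From $z_{0}=E_{i}w$ and the $\ell_{\infty}$-structure of $Y$, $1=\|Sz_{0}\|=\sup_{j'}\|S_{ij'}w\|$; since $\|S_{ij'}w\|\leq\|S_{ij'}\|_{D_{i}}<\epsilon$ for $j'\neq j$, this gives $\|S_{ij}w\|=1$, and together with $\|S_{ij}\|_{D_{i}}\leq\|S\|_{D}=1$ it follows that $\|S_{ij}w\|=\|S_{ij}\|_{D_{i}}=1$, i.e.\ $(w,S_{ij})\in\Pi_{D_{i}}(X_{i},Y_{j})$. Moreover $\|x-w\|=\|E_{i}x-E_{i}w\|=\|x_{0}-z_{0}\|<\epsilon$ and $\|T_{ij}-S_{ij}\|<\epsilon$, whence $\dist\big((x,T_{ij}),\Pi_{D_{i}}(X_{i},Y_{j})\big)<\epsilon$, which is what was needed. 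I expect the step pinning $z_{0}$ down to $E_{i}D_{i}$ to be the genuine obstacle: it is precisely there that the hypothesis $D=\overline{co}(\cup_{k}E_{k}D_{k})$ and the $\ell_{1}$--$\ell_{\infty}$ geometry are used in an essential way (a naive projection $P_{i}z_{0}$ only produces a point within $\epsilon^{2}$ of such a slice, hence not necessarily in $\Pi_{D_{i}}$). Everything else reduces to routine bookkeeping with the norm-one embeddings $E_{k},F_{j}$ and projections $P_{k},Q_{j}$.
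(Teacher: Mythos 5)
Your proposal is correct and follows essentially the same route as the paper: lift $(x,T_{ij})$ to $(E_ix,\,F_jT_{ij}P_i)$, apply the \emph{BPBp} on $D$, push the resulting pair back down via $Q_j\cdot E_i$, and use the hypothesis $D=\overline{co}(\cup_k E_kD_k)$ together with the $\ell_1$--$\ell_\infty$ geometry and the smallness of the off-block components of $S$ to force the norming point into $E_iD_i$. The only (harmless) difference is in that localization step: the paper first extracts a series representation $z_0=\sum_k\lambda_kE_ku_k$ by a diagonal and Fatou argument and then shows $\lambda_i=1$, whereas you run the same weight estimate directly on the finite convex approximants and pass to the limit, which is slightly more economical.
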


\begin{proof}
Fix $h\in I$ and $k\in J$. Suppose that $\|T(x_h)\|>1-\eta_{D}(\epsilon)$ for $T \in \mathcal{L}(X_{h},Y_{k})$, $\|T\|_{D_h}=1$ and $x_h\in D_{h}$.
Define an operator $\widetilde{T}=F_{k}TP_{h}\in \mathcal{L}(X,Y)$. It is easy to check that
$$
\|\widetilde{T}\|_{D}=\|T\|_{D_h},
$$
so that $\|\widetilde{T}(E_{h}x_{h})\|>1-\eta_{D}(\epsilon)$. The assumption gives us $(u ,\widetilde{S})\in\Pi_{D}(X,Y)$ such that
$$
\|\widetilde{S}-\widetilde{T}\|<\epsilon~~~and~~~\|u-E_{h}x_{h}\|<\epsilon.
$$
Define $S=Q_{k}\widetilde{S}E_{h}\in\mathcal{L}(X_{h},Y_{k})$. Then $\|S-T\|\leq\|\widetilde{S}-\widetilde{T}\|<\epsilon.$
For $j\neq k$, we have $Q_{j}\widetilde{T}=0$ by the definition of $\widetilde{T}$, which implies that
$$
\|Q_{j}\widetilde{S}\|_{D}=\|Q_{j}\widetilde{S}-Q_{j}\widetilde{T}\|_{D}\leq \|\widetilde{S}-\widetilde{T}\|\leq\epsilon.
$$
Since the range of $\widetilde{S}$ is the $\ell_{\infty}$-sum of $Y_{j}$'s, we have $\|Q_{k}\widetilde{S}\|_{D}=\|\widetilde{S}\|_{D}=1=\|\widetilde{S}(u)\|=
\|Q_{k}\widetilde{S}u\|$.
It follows from the assumption $D=\overline{co} (\cup E_i D_i)$ that every $u \in D$ can be written by
$$
u=\sum_{i=1}^{\infty} \lambda_{i} E_{i} u_{i},
$$
where $u_i \in D_i$ and $\sum_{i=1}^{\infty} \lambda_{i} \leq 1$.
Indeed, choose $v^n \in co(\cup E_i D_i )$ converging to $u$. We can write $v^n =\sum_{i=1}^{\infty} \lambda_{i}^{n}E_{i}v_{i}^{n}$ ($v_{i}^{n}\in D_{i}$), where $v_{i}^{n}=0$ and $\lambda_{i}^{n} =0$ except finitely many $i$'s. Since $E_i D_i \cap E_j D_j = \{0\}$ for $i \neq j$, we have that
$\lambda_{i}^{n} v_{i}^{n} \to P_i u $ for every $i$ as $n\to \infty$. By the diagonal argument, up to a subsequence, there exists a sequence $\{\lambda_{i}\}$ such that $\lambda_{i}^{n} \rightarrow \lambda_{i}\geq 0$ for every $i$ as $n \to \infty$. By the Fatou lemma we obtain that $\sum_{i=1}^{\infty} \lambda_{i=1} \leq 1$. We can also see that there exists $u_i \in D_i$ for every $i$ such that $v_{i}^{n} \rightarrow u_{i}$ as $n \to \infty$ and $\lambda_{i} u_{i}=P_{i}u$, hence $u = \sum_{i=1}^{\infty} \lambda_{i}E_{i}u_{i}$.

Since $\| Q_k\tilde{S}(u) \| =1$ and $E_i u_i \in D$ for every $i$, we have that $\|Q_k\tilde{S}(E_i u_i) \| =1$ for every $i$ where $\lambda_{i} \neq 0$ and also that $\sum_{i=1}^{\infty}\lambda_i =1$.
It follows from $\tilde TE_i =0$ for $i\neq h$ that
$\|Q_k\tilde{S}E_i\| \leq \|\tilde{T} - \tilde{S}\| \leq \epsilon$ for $i\neq h$.
Therefore,
$$1=\|Q_k\tilde{S}u\| \leq \lambda_h \|Q_k\tilde{S}E_h u_h\| + \epsilon \sum_{i\neq h} \lambda_i \leq \sum \lambda_i =1,$$
which implies that $\lambda_i =0$ for $i\neq h$, $\lambda_h=1$ and $\|S(u_h)\| = \|Q_k\tilde{S}E_h u_h\|=1=\|S\|_{D_h}$.
Further,
$$
\|u_h -x_h\|=\|P_h (u-E_h x_h)\| \leq \|u - E_h x_h\| \leq \epsilon.
$$
\end{proof}

If we fix the domain space $X$, then the reverse inequality also holds for the $\ell_{\infty}$-sum of range spaces.

\begin{proposition}\label{ell-infty}
$\eta_{D}(X,Y)=\inf_{j\in J}\eta_{D}(X,Y_{j})$
\end{proposition}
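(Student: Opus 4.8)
The statement is an equality of two functions of $\epsilon$, so the plan is to prove the two inequalities separately. For the inequality $\eta_{D}(X,Y)(\epsilon)\le\eta_{D}(X,Y_{j})(\epsilon)$ for each fixed $j\in J$ (which gives $\le\inf_{j}$, and which also follows from the preceding proposition with a one-point index set) I would take a pair $(x,T)$ with $T\in\mathcal{L}(X,Y_{j})$, $\|T\|_{D}=1$ and $\dist((x,T),\Pi_{D}(X,Y_{j}))\ge\epsilon$, and pass to $\widetilde{T}:=F_{j}T\in\mathcal{L}(X,Y)$. Since $F_{j}$ is an isometric embedding, $\|\widetilde{T}\|_{D}=1$ and $\|\widetilde{T}x\|=\|Tx\|$, so it suffices to prove $\dist((x,\widetilde{T}),\Pi_{D}(X,Y))\ge\epsilon$. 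If this failed, pick $(u,S)\in\Pi_{D}(X,Y)$ with $\delta:=\max\{\|x-u\|,\|\widetilde{T}-S\|\}<\epsilon\le1$. For every $k\ne j$ one has $Q_{k}\widetilde{T}=0$, hence $\|Q_{k}S\|_{D}\le\|Q_{k}S\|=\|Q_{k}(S-\widetilde{T})\|\le\delta<1$ and so $\|Q_{k}Su\|<1$; since $\|Su\|=\|S\|_{D}=1$ and $Y$ is an $\ell_{\infty}$-sum, this forces $\|Q_{j}Su\|=1=\|Q_{j}S\|_{D}$, i.e. $(u,Q_{j}S)\in\Pi_{D}(X,Y_{j})$, with $\|x-u\|<\epsilon$ and $\|T-Q_{j}S\|=\|Q_{j}(\widetilde{T}-S)\|<\epsilon$, contradicting $\dist((x,T),\Pi_{D}(X,Y_{j}))\ge\epsilon$. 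Taking the infimum over all such $(x,T)$ yields the inequality.

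For the reverse inequality I would verify that $\epsilon\mapsto\inf_{j}\eta_{D}(X,Y_{j})(\epsilon)$ is admissible in the definition of the \emph{BPBp} on $D$ for $(X,Y)$, hence is dominated by the optimal such function $\eta_{D}(X,Y)$. Fix $\epsilon$, put $\rho:=\inf_{j}\eta_{D}(X,Y_{j})(\epsilon)$, and take $T\in\mathcal{L}(X,Y)$ with $\|T\|_{D}=1$ and $x\in D$ with $\|Tx\|>1-\rho$. Since $\|Tx\|=\sup_{j}\|Q_{j}Tx\|$, choose $j_{0}$ with $\|Q_{j_{0}}Tx\|>1-\rho$; then $m:=\|Q_{j_{0}}T\|_{D}$ satisfies $1-\rho<\|Q_{j_{0}}Tx\|\le m\le1$, and $m^{-1}Q_{j_{0}}T\in\mathcal{L}(X,Y_{j_{0}})$ has $D$-norm $1$ with $\|m^{-1}Q_{j_{0}}Tx\|=m^{-1}\|Q_{j_{0}}Tx\|>1-\rho\ge1-\eta_{D}(X,Y_{j_{0}})(\epsilon)$. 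By the very definition of $\eta_{D}(X,Y_{j_{0}})(\epsilon)$, the pair $(x,m^{-1}Q_{j_{0}}T)$ is then at distance $<\epsilon$ from $\Pi_{D}(X,Y_{j_{0}})$, so there are $u\in D$ and $R\in\mathcal{L}(X,Y_{j_{0}})$ with $\|Ru\|=\|R\|_{D}=1$, $\|x-u\|<\epsilon$ and $\|m^{-1}Q_{j_{0}}T-R\|<\epsilon$. I then define $S\in\mathcal{L}(X,Y)$ by $Q_{j_{0}}S=mR$ and $Q_{j}S=Q_{j}T$ for $j\ne j_{0}$; since $\|mR\|_{D}=m=\|Q_{j_{0}}T\|_{D}$, replacing the $j_{0}$-th coordinate does not change the supremum defining the $D$-norm, so $\|S\|_{D}=\|T\|_{D}=1$; moreover $\|S-T\|=\|mR-Q_{j_{0}}T\|=m\,\|R-m^{-1}Q_{j_{0}}T\|<m\epsilon\le\epsilon$ and $\|x-u\|<\epsilon$.

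The delicate point — and the step I expect to be the main obstacle — is to arrange that $(u,S)\in\Pi_{D}(X,Y)$, i.e. that $\|Su\|=1$. With the $S$ above one has $\|Su\|=\max\{m,\ \sup_{j\ne j_{0}}\|Q_{j}Tu\|\}$, which equals $1$ precisely when $m=1$; so when the dominant coordinate already satisfies $\|Q_{j_{0}}T\|_{D}=1$ one simply takes $Q_{j_{0}}S=R$ and is done. In the remaining case ($m<1$, which forces $\sup_{j\ne j_{0}}\|Q_{j}T\|_{D}=1$) I would first replace $T$ by a nearby operator whose $j_{0}$-th coordinate has $D$-norm $1$: choosing $g\in S_{Y_{j_{0}}^{*}}$ norming $Q_{j_{0}}Tx$ and, via Theorem \ref{first}, a functional $\psi\in X^{*}$ close to $g\circ Q_{j_{0}}T$ that attains its $D$-norm near $x$, one adds to $Q_{j_{0}}T$ a rank-one operator $v\mapsto\big((1+\theta)\psi(v)-(g\circ Q_{j_{0}}T)(v)\big)w$ with $w\in B_{Y_{j_{0}}}$, tuned exactly as in the proof of Theorem \ref{beta} so that the modified coordinate has $D$-norm $1$, is nearly $D$-norm-attaining near $x$, and the modification is small — its size governed by $1-m$, the Ekeland error, and $\|T\|$, which is the very estimate occurring in Theorem \ref{beta}; this reduces matters to the case $\|Q_{j_{0}}T\|_{D}=1$ already treated. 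Once the construction is complete, $(u,S)\in\Pi_{D}(X,Y)$ with $\max\{\|x-u\|,\|T-S\|\}<\epsilon$, so $\inf_{j}\eta_{D}(X,Y_{j})$ is admissible for $(X,Y)$ on $D$, whence $\inf_{j}\eta_{D}(X,Y_{j})(\epsilon)\le\eta_{D}(X,Y)(\epsilon)$; combined with the first part this gives the asserted equality.
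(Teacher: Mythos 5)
Your strategy coincides with the paper's: the inequality $\eta_{D}(X,Y)\le\inf_{j}\eta_{D}(X,Y_{j})$ is obtained there by citing the preceding proposition (with a one‑point index set on the domain side), and your direct argument for it is correct; for the reverse inequality the paper uses exactly your construction, with one difference that matters. Having chosen $k$ with $\|Q_{k}Tx_{0}\|>1-\alpha$ and produced from the \emph{BPBp} for $(X,Y_{k})$ a pair $(u,S_{k})$ with $\|S_{k}u\|=\|S_{k}\|_{D}=1$ and $\|S_{k}-Q_{k}T\|<\epsilon$, the paper sets $S=\sum_{j\ne k}F_{j}Q_{j}T+F_{k}S_{k}$, i.e.\ it inserts the $D$-norm-one operator $S_{k}$ itself rather than your rescaled $mR$. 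Then $\|S\|_{D}=\max\bigl\{\|S_{k}\|_{D},\sup_{j\ne k}\|Q_{j}T\|_{D}\bigr\}=1$ and $\|Su\|\ge\|S_{k}u\|=1$, so $(u,S)\in\Pi_{D}(X,Y)$ automatically and the obstacle you describe ($\|Su\|=\max\{m,\dots\}$) never appears.

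The difficulty you isolate is nevertheless genuine — it has only been relocated: the paper applies the \emph{BPBp} for $(X,Y_{k})$ to $Q_{k}T$ as though $\|Q_{k}T\|_{D}=1$, and when $m=\|Q_{k}T\|_{D}<1$ one must first normalize, at a cost of $\|m^{-1}Q_{k}T-Q_{k}T\|=\frac{1-m}{m}\|Q_{k}T\|$ in the operator norm, a quantity controlled by $\|T\|$ and not by $\|T\|_{D}=1$. Your proposed repair does not close this gap. The perturbation in Theorem~\ref{beta} pins the $D$-norm of the modified operator to exactly $(1+\eta)\|g\|_{D}$ only because norms in $Y$ are computed through the separated norming family $\{f_{\alpha}\}$ of property $(\beta)$; for a general $Y_{j_{0}}$ there is no way to tune $\theta$ so that $\bigl\|Q_{j_{0}}T+\bigl((1+\theta)\psi-g\circ Q_{j_{0}}T\bigr)(\cdot)\,w\bigr\|_{D}$ equals $1$ exactly, and even if one could, the size of the required perturbation is of order $(1-m)$ times operator norms of $\psi$ and $g\circ Q_{j_{0}}T$, hence of order $(1-m)\|T\|$, which cannot be absorbed into $\epsilon$ uniformly over all $T$ with $\|T\|_{D}=1$ (on a ``thin'' set $D$ the operator norm is not dominated by the $D$-seminorm). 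So your write-up correctly flags the weak point of this argument but does not repair it; a complete proof must either treat only the case $\|Q_{k}T\|_{D}=1$ (which is all the paper actually writes down), measure distances between operators in the $D$-seminorm (where normalization costs only $1-m<\alpha$), or assume something like $rB_{X}\subseteq D$ so that $\|\cdot\|\le r^{-1}\|\cdot\|_{D}$.
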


\begin{proof}
It is enough to prove that $\eta_{D}(X,Y)\geq\inf_{j\in J}\eta_{D}(X,Y_{j})$. Fix $\epsilon\in (0,1)$.
Let $0\leq \alpha=\inf_{j\in J}\eta_{D}(X,Y_{j})(\epsilon)<1$.
For $0<\alpha <1$, suppose that $\|Tx_0\|>1-\alpha$ for $x_0\in D$
and $T\in \mathcal{L}(X,Y)$ with $\|T\|_{D}=1$. We can choose $k\in J$ so that $\|Q_{k}Tx_0\|>1-\alpha$.
Then there exist $S_{k}:X\rightarrow Y_{k}$ and $u\in D$ such that
$$
\|S_{k}u\|=\|S_{k}\|_{D}=1,~~~\|S_{k}-Q_{k}T\|<\epsilon~~~and~~~\|x_0 -u\|<\epsilon.
$$
Define $S:X\rightarrow Y$ by
$$
S=\sum_{j\neq k}F_{j}Q_{j}T + F_{k}S_{k}.
$$
It is easy to check that $(u,S)\in\Pi_{D}(X,Y)$. Moreover
$$
\|T-S\|=\sup_{j\in J}\|Q_{j}(T-S)\|=\|Q_{k}T-S_{k}\|<\epsilon,
$$
which means that $\eta_{D}(X,Y)(\epsilon)\geq\alpha.$
\end{proof}
 We now consider the case where $X$ is the $\ell_{\infty}$-sum of domain spaces $X_i$.

\begin{proposition}
Let $X=[\bigoplus_{i\in I}X_{i}]_{\ell_{\infty}}$. Assume that $D$ is a bounded closed convex subset of $B_X$,
$D=\Pi_{i\in I}D_i$ and that there exists $\epsilon_0 > 0$ such that $\frac{\lambda x_{i}}{\|x_i\|} \in D_i$ for every $x_i \in D_i$ and for every $0 \leq \lambda \leq \epsilon_0$.
If the pair $(X,Y)$ has the \emph{BPBp} on $D$ with $\eta(\epsilon)$, then the pair $(X_{i},Y)$ has the \emph{BPBp} on $D_{i}$ with $\eta(\epsilon)$ for every $i\in I$. More precisely,
$$
\eta_{D_i}(X_i, Y)(\epsilon) \geq\eta_{D}(X,Y)(\epsilon)~~~\mbox{for every}~i\in I.
$$
\end{proposition}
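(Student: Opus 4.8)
The plan is to lift an operator on $X_h$ to the whole space $X$ through the canonical projection, apply the \emph{BPBp} on $D$, and then restrict back to $X_h$; the one delicate point will be recovering \emph{exact} norm attainment on $D_h$, which is exactly where the hypothesis on $D$ gets used. Fix $h\in I$ and write $E_h\colon X_h\to X$ and $P_h\colon X\to X_h$ for the canonical isometric embedding and the norm-one coordinate projection, so that $P_hE_h=\mathrm{Id}_{X_h}$ and, since $D=\prod_{i\in I}D_i$, $P_h(D)=D_h$. I would first note that taking $\lambda=0$ in the standing hypothesis gives $0\in D_i$ for every $i$, hence $E_hv_h\in D$ whenever $v_h\in D_h$; I also restrict attention to $0<\epsilon<\epsilon_0$, which is the range relevant to the property. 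Now, given $T\in\mathcal{L}(X_h,Y)$ with $\|T\|_{D_h}=1$ and $x_h\in D_h$ with $\|Tx_h\|>1-\eta_D(X,Y)(\epsilon)$, I would set $\widetilde{T}:=TP_h\in\mathcal{L}(X,Y)$; then $\|\widetilde{T}\|_D=\|T\|_{D_h}=1$ and $\|\widetilde{T}(E_hx_h)\|=\|Tx_h\|>1-\eta_D(X,Y)(\epsilon)$.

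Applying the \emph{BPBp} of $(X,Y)$ on $D$ to the pair $(E_hx_h,\widetilde{T})$ produces $(u,\widetilde{S})\in\Pi_D(X,Y)$ with $\|u-E_hx_h\|<\epsilon$ and $\|\widetilde{S}-\widetilde{T}\|<\epsilon$. I would then put $S:=\widetilde{S}E_h\in\mathcal{L}(X_h,Y)$ and $u_h:=P_hu\in D_h$. From $S-T=(\widetilde{S}-\widetilde{T})E_h$ one gets $\|S-T\|<\epsilon$, and $\|u_h-x_h\|=\|P_h(u-E_hx_h)\|\leq\|u-E_hx_h\|<\epsilon$; moreover $\|S\|_{D_h}=\sup_{v_h\in D_h}\|\widetilde{S}(E_hv_h)\|\leq\|\widetilde{S}\|_D=1$ because $E_hv_h\in D$. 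So the proposition will follow once I check that $\|Su_h\|=1$, i.e. that $\|\widetilde{S}(E_hu_h)\|=\|\widetilde{S}u\|$.

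For that last point, write $u=(u_i)_{i\in I}$; then $\|u-E_hx_h\|<\epsilon$ forces $\|u_i\|<\epsilon<\epsilon_0$ for all $i\neq h$, so by convexity of each $D_i$ together with the hypothesis one has $tu_i\in D_i$ for all $0\leq t\leq 1+\rho$ and all $i\neq h$, where $\rho>0$ is any number with $1+\rho<\epsilon_0/\epsilon$. I would then consider the path $u(t):=E_hu_h+t\,(u-E_hu_h)$, i.e. $u(t)_h=u_h$ and $u(t)_i=tu_i$ for $i\neq h$; this stays in $D=\prod_{i\in I}D_i$ for every $t\in[0,1+\rho]$, the function $t\mapsto\|\widetilde{S}u(t)\|$ is convex (being the norm of an affine $Y$-valued function of $t$) and at most $\|\widetilde{S}\|_D=1$ on $[0,1+\rho]$, and it equals $1$ at the interior point $t=1$ since $u(1)=u$. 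A convex function attaining its maximum at an interior point of an interval is constant there, so $\|\widetilde{S}(E_hu_h)\|=\|\widetilde{S}u(0)\|=1$, giving $(u_h,S)\in\Pi_{D_h}(X_h,Y)$ and $\dist\big((x_h,T),\Pi_{D_h}(X_h,Y)\big)<\epsilon$, hence $\eta_{D_h}(X_h,Y)(\epsilon)\geq\eta_D(X,Y)(\epsilon)$. I expect this final step to be the main obstacle: a priori one only obtains $\|S\|_{D_h}$ close to $1$, and it is precisely the condition $\tfrac{\lambda x_i}{\|x_i\|}\in D_i$ — which lets the segment from $E_hu_h$ to $u$ be prolonged slightly beyond $u$ while remaining in $D$ — that converts the maximizer $t=1$ into an interior one and forces $t\mapsto\|\widetilde{S}u(t)\|$ to be constant.
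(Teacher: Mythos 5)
Your proof is correct and follows essentially the same route as the paper's: lift $T$ to $\widetilde{T}=TP_h$, apply the \emph{BPBp} on $D$ to $(E_hx_h,\widetilde{T})$, restrict back via $E_h$, and use the dilation hypothesis on the coordinates $i\neq h$ to force $\|\widetilde{S}(E_hP_hu)\|=1$. Your ``convex function with an interior maximizer is constant'' step is just a repackaging of the paper's argument, which writes $u=(1-\tfrac{\epsilon}{\epsilon_0})E_hP_hu+\tfrac{\epsilon}{\epsilon_0}w$ with $w\in D$ the point whose $i$-th coordinate is $\tfrac{\epsilon_0}{\epsilon}P_iu$ for $i\neq h$ (i.e.\ the far endpoint of your segment) and applies the triangle inequality.
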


\begin{proof}
Fix $h\in I$. Suppose that $\|T(x_h)\|>1-\eta(\epsilon)$ for some $T\in \mathcal{L}(X_h,Y)$ with $\|T\|_{D_h}=1$ and $x_h \in P_h(D)$. Define $\tilde{T} \in \mathcal{L}(X,Y)$ by
$\widetilde{T}(u_h,z)=T(u_h)$, where $(u_h,z)\in P_{h}X\oplus (I-P_{h})X$. Then $\|\widetilde{T}\|_{D}=1$ and $\|\widetilde{T}(E_{h}x_h)\|>1-\eta(\epsilon)$.
Since $E_{h}x_{h}\in D$ and the pair $(X,Y)$ has the \emph{BPBp} on $D$, for $0<\epsilon<\epsilon_0$ there exist $\widetilde{S}\in\mathcal{L}(X,Y)$ with $\|\widetilde{S}\|_{D}=1$ and $u \in D$ such that
$$
\|\widetilde{S}(u)\|=1,~~~\|\widetilde{S}-\widetilde{T}\|<\epsilon,~~~and~~~\|E_{h}x_h-u\|<\epsilon.
$$
Now we define an operator $S\in\mathcal{L}(X_h,Y)$ for $u_h\in X_{h}$
$$
S(u_h)=\widetilde{S}(E_{h}u_h).
$$
From $\|E_{h}x_h-u\|<\epsilon$, we get $\|P_{i}u\|<\epsilon$ for $i\neq h$. The assumption yields that $\frac{\epsilon_0}{\epsilon}P_{i}(u)\in D_i$ for $i\neq h$. Let $w$ be the element in $D$ such that $P_{h}(w)=P_{h}(u)$ and $P_{i}(w)=\frac{\epsilon_0}{\epsilon}P_{i}u$ for $i\neq h$. Then
$$
\widetilde{S}(w)=\widetilde{S}(E_{h}P_{h}u)+\sum_{i\neq h}\frac{\epsilon_0}{\epsilon}\widetilde{S}(E_{i}P_{i}u),
$$
hence,
$$
\widetilde{S}(u)=\left(1-\frac{\epsilon}{\epsilon_0}\right)\widetilde{S}(E_{h}P_{h}u)+\frac{\epsilon}{\epsilon_0}\widetilde{S}(w).
$$
It follows from $\|\widetilde{S}\|_{D}=1=\|\tilde{S}(u)\|$ that $\|\widetilde{S}(E_{h}P_{h}u)\|=\|\widetilde{S}(w)\|=1$.
Since $\|S\|_{D_h}\leq \|\widetilde{S}\|_{D}=1$, $S$ attains its maximum at $P_{h}u $ on $D_{h}$. Moreover,
$$
\|S-T\|<\epsilon~~~and~~~\|x_h-P_{h}u\|=\|E_{h}x_h -E_{h}P_{h}u\| \leq \|E_h x_h -u\|<\epsilon,
$$
so that the proof is completed.
\end{proof}

Examples satisfying the above assumption on $D$ include $\bigoplus_{i\in I} \lambda_{i}B_{X_{i}}$ with $\inf_{i\in I}\lambda_{i}>0$
as well as $B_X$. The case of the $\ell_{1}$-sum of range spaces follows immediately from \cite[Proposition 2.7]{ACKLM}, so we omit the proof.

\begin{proposition}
Let $D$ be a bounded closed convex subset of $B_X$ and $Y=[\bigoplus_{j\in J}Y_j ]_{\ell_1}$. If the pair $(X,Y)$ has the the \emph{BPBp} on $D$ with $\eta(\epsilon)$, then the pair $(X,Y_j)$ also has the the \emph{BPBp}on $D$ with $\eta(\epsilon)$ for every $j\in J$. More precisely, for every $j\in J$,
$$
\eta_D(X,Y) \leq \eta_D(X,Y_j)
$$
\end{proposition}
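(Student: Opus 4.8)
The plan is to reduce the statement to the hypothesis on $(X,Y)$ by passing through the canonical isometric embedding $F_j\colon Y_j\to Y$ and the norm-one projection $Q_j\colon Y\to Y_j$, exploiting that $F_j(Y_j)$ is an $\ell_1$-summand of $Y$, not merely a $1$-complemented subspace. Fix $j\in J$; it is enough to prove $\eta_D(X,Y)(\epsilon)\leq\eta_D(X,Y_j)(\epsilon)$ for every $0<\epsilon<1$, since together with the hypothesis (which gives $\eta(\epsilon)\leq\eta_D(X,Y)(\epsilon)$, by the remark following the definition of $\eta_D(X,Y)$) this yields the \emph{BPBp} of $(X,Y_j)$ on $D$ with the same $\eta$. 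So fix $0<\epsilon<1$ and a pair $(x_0,T)$ with $x_0\in D$, $T\in\mathcal L(X,Y_j)$, $\|T\|_D=1$ and $\|Tx_0\|>1-\eta_D(X,Y)(\epsilon)$; I must produce $(z,S)\in\Pi_D(X,Y_j)$ with $\|x_0-z\|<\epsilon$ and $\|T-S\|<\epsilon$.

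First I would lift $T$ to $\widetilde T:=F_jT\in\mathcal L(X,Y)$. Since $F_j$ is an isometry, $\|\widetilde T\|_D=\|T\|_D=1$ and $\|\widetilde T x_0\|=\|Tx_0\|>1-\eta_D(X,Y)(\epsilon)$, so by the very definition of $\eta_D(X,Y)$ we get $\dist\big((x_0,\widetilde T),\Pi_D(X,Y)\big)<\epsilon$; hence there are $z\in D$ and $\widetilde S\in\mathcal L(X,Y)$ with $\|\widetilde S\|_D=\|\widetilde S z\|=1$, $\|x_0-z\|<\epsilon$ and $\|\widetilde T-\widetilde S\|<\epsilon$. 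Then I would put $S:=Q_j\widetilde S\in\mathcal L(X,Y_j)$. Since $\|Q_j\|=1$ and $Q_j\widetilde T=T$, we obtain $\|T-S\|=\|Q_j(\widetilde T-\widetilde S)\|\leq\|\widetilde T-\widetilde S\|<\epsilon$, while $\|x_0-z\|<\epsilon$ is already in hand; thus it only remains to verify $(z,S)\in\Pi_D(X,Y_j)$, i.e. $\|Sz\|=\|S\|_D=1$.

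This last verification is where the $\ell_1$-structure of $Y$ enters, and it is the step I expect to be the main obstacle. The soft part is routine: from $\|\widetilde S x\|=\sum_{i\in J}\|Q_i\widetilde S x\|$ for $x\in D$ one gets $\|S\|_D=\|Q_j\widetilde S\|_D\leq\|\widetilde S\|_D=1$; since $Q_i\widetilde T=0$ for $i\neq j$ we have $Q_i\widetilde S=Q_i(\widetilde S-\widetilde T)$, hence $\sum_{i\neq j}\|Q_i\widetilde S z\|\leq\|(\widetilde S-\widetilde T)z\|<\epsilon$, and combined with $1=\|\widetilde S z\|=\|Sz\|+\sum_{i\neq j}\|Q_i\widetilde S z\|$ this already gives $\|Sz\|>1-\epsilon$ and $\|S\|_D\in[1-\epsilon,1]$. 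Upgrading ``$S$ almost attains its $D$-norm at $z$'' to \emph{exact} attainment, without enlarging $\|T-S\|$ beyond $\epsilon$ or moving $z$ away from $x_0$, is the delicate point; it is precisely here that the $\ell_1$-summand structure of $F_j(Y_j)$ in $Y$ is essential, and the argument runs parallel to \cite[Proposition 2.7]{ACKLM} (the case $D=B_X$), the only adjustment for a general $D\subseteq B_X$ being that all suprema are taken over $D$, which causes no difficulty since $D\subseteq B_X$. Once $(z,S)\in\Pi_D(X,Y_j)$ is established, it witnesses $\dist\big((x_0,T),\Pi_D(X,Y_j)\big)<\epsilon$; as $(x_0,T)$ was arbitrary with $\|T\|_D=1$, this gives $\eta_D(X,Y)(\epsilon)\leq\eta_D(X,Y_j)(\epsilon)$, as required.
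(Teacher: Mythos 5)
The paper gives no argument for this proposition at all: it simply notes that the statement ``follows immediately from \cite[Proposition 2.7]{ACKLM}'' and omits the proof, so there is nothing to compare your reduction against line by line. Your setup (lift $T$ to $\widetilde T=F_jT$, apply the hypothesis to get $(z,\widetilde S)$, push back down to $Y_j$) is the right skeleton. The problem is the step you yourself single out as delicate: it is not merely delicate, it fails for the operator you commit to. With $S:=Q_j\widetilde S$ one gets $\|Sz\|>1-\epsilon$ and $\|S\|_D\leq 1$, but there is no reason for $\|Sz\|=\|S\|_D$, and in general it is false. For instance, take $J=\{1,2\}$, $Y_1=Y_2=\R$, $X=\ell_2^2$, $D=B_X$, $\widetilde S=(f_1,f_2)$ with $f_1=(a,0)$, $f_2=(0,b)$, $a^2+b^2=1$, $b$ small: then $\|\widetilde S\|_D=1$ is attained only at $z=\pm(a,b)$, while $|f_1(z)|=a^2<a=\|f_1\|_D$, and $\widetilde S$ is within $O(b)$ of $\widetilde T=F_1((1,0))$. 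So ``upgrading almost-attainment of $Q_j\widetilde S$ at $z$ to exact attainment'' is not what happens in \cite[Proposition 2.7]{ACKLM}; the operator itself must be changed, and that change is the missing idea.

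The actual mechanism is to replace $Q_j$ by an auxiliary norm-one map $\Phi:Y\to Y_j$ adapted to $\widetilde S z$. Pick $y^*=(y_i^*)_{i\in J}\in S_{Y^*}$ with $y^*(\widetilde S z)=1$; since $\sum_i y_i^*(Q_i\widetilde S z)=1=\sum_i\|Q_i\widetilde S z\|$ and $\|y_i^*\|\leq 1$, one has $y_i^*(Q_i\widetilde S z)=\|Q_i\widetilde S z\|$ for every $i$. Set $v=Q_j\widetilde S z/\|Q_j\widetilde S z\|$ (legitimate since $\|Q_j\widetilde S z\|>1-\epsilon>0$) and define
$$
\Phi(y)=Q_j y+\Bigl(\sum_{i\neq j}y_i^*(Q_i y)\Bigr)v ,
$$
so that $\|\Phi(y)\|\leq\sum_i\|Q_iy\|=\|y\|$, $\Phi(\widetilde S z)=\|Q_j\widetilde S z\|\,v+(1-\|Q_j\widetilde S z\|)\,v=v$, and $\Phi\circ\widetilde T=T$ because $Q_i\widetilde T=0$ for $i\neq j$. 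Then $S:=\Phi\circ\widetilde S$ satisfies $\|S\|_D\leq\|\widetilde S\|_D=1=\|Sz\|$ and $\|S-T\|=\|\Phi\circ(\widetilde S-\widetilde T)\|\leq\|\widetilde S-\widetilde T\|<\epsilon$, which is exactly the membership $(z,S)\in\Pi_D(X,Y_j)$ you need; the rest of your argument (including the passage from $\dist((x_0,\widetilde T),\Pi_D(X,Y))<\epsilon$ to the inequality between the $\eta$'s) then goes through as you wrote it. Without this replacement of $Q_j$ by $\Phi$, the proof is not complete.
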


A Banach space $X$ is called a universal \emph{BPB} domain space if for every Banach space $Z$, the pair $(X,Z)$ has the \emph{BPB}p on $B_X$. It was proved in \cite{ACKLM} that the base field $\R$ or $\C$ is the unique Banach space which is a universal \emph{BPB} domain space in any equivalent renorming. Its proof follows immediately from \cite[Lemma 3.2]{ACKLM}: Let $X$ be a Banach space containing a non-trivial $L$-summand and $Y$ be a strictly convex Banach space. If the pair $(X,Y)$ has the \emph{BPB}p on $B_X$, then $Y$ is uniformly convex. We can extend this result to a bounded closed convex subset $D$ of $X$. With proposition \ref{univ} and remark \ref{remark}, using similar argument in \cite{ACKLM}, we can say that the the
base field $\R$ or $\C$ is the unique \emph{BPB} domain on every bounded closed convex subset $D$.

\begin{proposition}\label{univ}
Let $X$ be a (real) Banach space containing a nontrivial $L$-summand, i.e. $X=X_{1}\oplus_{1}X_{2}$ for some non trivial subspaces $X_1$ and $X_2$. Let $D$ be a bounded closed convex subset of $B_X$ such that $D=\overline{co}(E_{1}D_{1}\cup E_{2}D_{2})$.
If $Y$ is a strictly convex space and if the pair $(X,Y)$ has the \emph{BPBp} on $D$, then $Y$ is a uniformly convex space.
\end{proposition}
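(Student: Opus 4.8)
The plan is to adapt the proof of \cite[Lemma 3.2]{ACKLM}, with the closed unit ball replaced by $D$ and the midpoint of two points of $S_X$ replaced by the midpoint of a point of $E_1D_1$ and a point of $E_2D_2$. The one new difficulty is that an operator attaining $\|\cdot\|_D$ on $D$ need not attain its operator norm, so the norming point produced by the \emph{BPBp} need not lie on $S_X$; this is circumvented by exploiting the hull structure $D=\overline{co}(E_1D_1\cup E_2D_2)$ to bound the norms of the pieces.

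I would prove that for every $\epsilon>0$ there is $\delta>0$ such that $u,v\in S_Y$ with $\|\tfrac12(u+v)\|>1-\delta$ satisfy $\|u-v\|<\epsilon$; the passage from $S_Y$ to $B_Y$, hence to $\delta_Y(\epsilon)>0$ for every $\epsilon$, is routine. First, for $i=1,2$ (we may assume $D_i\neq\{0\}$), use the Bishop--Phelps theorem inside $X_i$ to choose a nonzero $f_i\in X_i^*$ and $x_i\in D_i$ so that, after scaling and possibly replacing $f_i$ by $-f_i$, one has $f_i(x_i)=\sup_{D_i}|f_i|=1$; put $N_i=\|f_i\|$. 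Given $\epsilon>0$, fix $\epsilon'>0$ small relative to $\epsilon$, $\|x_1\|$, $\|x_2\|$, $N_1$, $N_2$, set $\delta=\eta_D(X,Y)(\epsilon')$, and take $u,v\in S_Y$ with $\|\tfrac12(u+v)\|>1-\delta$. Define $T\in\mathcal{L}(X,Y)$ by $T(E_1y_1+E_2y_2)=f_1(y_1)u+f_2(y_2)v$. Since $\|u\|=\|v\|=1$ and $|f_i|\leq1$ on $D_i$, the hull description of $D$ and convexity of $\|\cdot\|$ give $\|T\|_D=1$, attained at $E_1x_1$; moreover $z:=\tfrac12E_1x_1+\tfrac12E_2x_2\in D$ and $T(z)=\tfrac12(u+v)$, so $\|T(z)\|>1-\delta$.

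By the \emph{BPBp} on $D$ there are $S\in\mathcal{L}(X,Y)$ and $w\in D$ with $\|S(w)\|=\|S\|_D=1$, $\|w-z\|<\epsilon'$ and $\|T-S\|<\epsilon'$. As in the proof of the first proposition of this section (passing to a subsequence in a defining sequence of convex combinations and using that $D_1,D_2$ are closed and $X$ is an $\ell_1$-sum), write $w=\lambda E_1w_1'+(1-\lambda)E_2w_2'$ with $w_i'\in D_i$ and $\lambda\in[0,1]$. Since the norm on $X$ is the $\ell_1$-norm, $\|w-z\|<\epsilon'$ forces $\|\lambda w_1'-\tfrac12x_1\|_{X_1}<\epsilon'$ and $\|(1-\lambda)w_2'-\tfrac12x_2\|_{X_2}<\epsilon'$, and because $x_i\neq0$ this keeps $\lambda$ and $1-\lambda$ bounded below once $\epsilon'$ is small. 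Now $E_iw_i'\in D$ yields $\|S(E_iw_i')\|\leq\|S\|_D=1$, so
\[
1=\|S(w)\|=\bigl\|\lambda S(E_1w_1')+(1-\lambda)S(E_2w_2')\bigr\|\leq\lambda\|S(E_1w_1')\|+(1-\lambda)\|S(E_2w_2')\|\leq1 ,
\]
which forces $\|S(E_1w_1')\|=\|S(E_2w_2')\|=1$ together with equality in the middle inequality; since $Y$ is strictly convex and $\lambda\in(0,1)$, this gives $S(E_1w_1')=S(E_2w_2')=:y_0\in S_Y$.

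Finally, $\|T-S\|<\epsilon'$ and $E_iw_i'\in D\subseteq B_X$ give $\|y_0-f_1(w_1')u\|<\epsilon'$ and $\|y_0-f_2(w_2')v\|<\epsilon'$; the reverse triangle inequality yields $\bigl|\,1-|f_1(w_1')|\,\bigr|<\epsilon'$, while $f_1(\lambda w_1')=\lambda f_1(w_1')$ lies within $N_1\epsilon'$ of $\tfrac12$ with $\lambda\geq0$, so $f_1(w_1')>0$; hence $f_1(w_1')>1-\epsilon'$ and $\|y_0-u\|<2\epsilon'$, and likewise $\|y_0-v\|<2\epsilon'$. Therefore $\|u-v\|<4\epsilon'<\epsilon$, which is uniform convexity. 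As indicated, the only genuine obstacle is collapsing the triangle-inequality chain: this is precisely where the missing information ``$\|S\|=1$'' is replaced by ``$\|S\|_D=1$'', using $D=\overline{co}(E_1D_1\cup E_2D_2)$ to bound $\|S(E_iw_i')\|$; everything else is bookkeeping of the parameter $\epsilon'$.
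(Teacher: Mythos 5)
Your proof is correct and follows essentially the same route as the paper: the same test operator $T(E_1y_1+E_2y_2)=f_1(y_1)u+f_2(y_2)v$ normed at $E_1x_1$ and evaluated at the midpoint $\tfrac12 E_1x_1+\tfrac12 E_2x_2$, the same decomposition of the BPBp point through $D=\overline{co}(E_1D_1\cup E_2D_2)$, and the same use of strict convexity to force $S(E_1w_1')=S(E_2w_2')$. The only divergence is in the final estimate, where it is in your favour: by showing $f_i(w_i')$ is positive and within $\epsilon'$ of $1$ and comparing $S(E_iw_i')$ directly with $T(E_iw_i')$, you bypass the paper's bounds on $\lvert\lambda-\tfrac12\rvert$ and on the full operator norm $\|S\|$, obtaining the cleaner conclusion $\|u-v\|<4\epsilon'$.
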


\begin{proof}
To prove that $Y$ is uniformly convex, for every $0<\epsilon<1/2$ we need to find $\delta(\epsilon)>0$ such that $\|y_1\|=1=\|y_2\|$ and $\|\frac{y_1 + y_2}{2}\|>1-\delta(\epsilon)$ implies that $\|y_1-y_2\|<\epsilon.$
By the Bishop-Phelps theorem, we can choose $e_{1}^{*}\in X_{1}^{*}$ such that
$$
\|e_{1}^{*}\|_{D_1}=e_{1}^{*}(e_1)=1,
$$
for some $e_1 \in D_{1}$.
We can extend (still call it by) $e_{1}^{*}\in X^{*}$ by $e_{1}^{*}(x_2)=0$ for every $x_2\in X_{2}$. Similarly we can choose $e_{2}^{*}\in X^{*}$ and $e_2 \in D_2$ as a counterpart of $e_{1}^{*}$ and $e_1$, respectively. Let $M \geq 4$ be a constant with $\|e_{1}^{*}\|<M$ and $\|e_{2}^{*}\|<M$.
Suppose that $y_1 , y_2 \in S_{Y}$ and $\|\frac{y_1 + y_2}{2}\|>1-\eta_{D}(\frac{\epsilon}{2M})$. Define an operator $T\in \mathcal{L}(X,Y)$ by
$$
T(u_1,u_2)=e_{1}^{*}(u_1)y_1 + e_{2}^{*}(u_2)y_2.
$$
Then $\|T\|<2M$ and $\|T(\frac{1}{2}E_1(e_1)+\frac{1}{2}E_2(e_2))\|=\|\frac{y_1 + y_2}{2}\|>1-\eta_{D}(\frac{\epsilon}{2M})$.
Since $\frac{1}{2}E_1(\e_1)+\frac{1}{2}E_2(e_2) \in D$ and since $(X,Y)$ has the \emph{BPBp} on $D$, there exist $S\in \mathcal{L}(X,Y)$ and $(x_1,x_2)\in D$ such that
$$
\|S-T\|<\frac{\epsilon}{2M},~~~\left|\left|\frac{1}{2}e_1-x_1\right|\right|+\left|\left|\frac{1}{2}e_2-x_2\right|\right|<\frac{\epsilon}{2M}~~~
and~~~\|S(x_1,x_2)\|=1.
$$
We can see that $x_1 \neq 0 \neq x_2$. Indeed, if $x_1=0$, then
$$1=\|S(0,x_2)\|\leq \|T(0,x_2)\|+\frac{\epsilon}{2M} \leq |e^{*}_{2}(x_2)|+\frac{\epsilon}{M} \leq \frac{1}{2} + \frac{2\epsilon}{M} < 1,
$$
which is a contradiction.

Choose $\{z_n\}\subset co (E_{1}D_{1}\cup E_{2}D_{2})$ converging to $(x_1,x_2)$. Since $E_{1}D_{1}$ and $E_{2}D_{2}$ are convex sets, $z_n$ can be written as $\lambda_n u_n + (1-\lambda_n)v_n$, where
$u_n \in E_{1}D_{1}$ and $v_n\in E_{2}D_{2}$. Passing to a subsequence we may assume $\lambda_n \to \lambda$ as $n \to \infty$. Then it is easy to check that $\lambda u_n \to E_1 x_1$ and $(1-\lambda)v_n \to E_2 x_2$ as $n \to \infty$, which implies $0<\lambda<1$ because $x_1 \neq 0 \neq x_2$. Since $E_1 D_1$ and $E_2 D_2$ are closed, we can see that $\frac{E_1 x_1}{\lambda} \in E_1 D_1$ and $\frac{E_2 x_2}{1-\lambda} \in E_2 D_2$.

We claim that $|\lambda-\frac{1}{2}|\leq\frac{1}{2}\epsilon$.
Suppose $\lambda < \frac{1}{2}-\frac{1}{2}\epsilon$. Let $\delta=\frac{1}{2}-\lambda > \frac{\epsilon}{2}$ and $x_0=\frac{1}{2}e_1-x_1$. Then
$$
\frac{x_1}{\lambda}=\frac{e_1}{1-2\delta}+\frac{2x_0}{1-2\delta},
$$
Since $|e_{1}^{*}(x_0)|\leq \|e_{1}^{*}\|\cdot \|x_0\|<M\cdot \frac{\epsilon}{2M}=\frac{\epsilon}{2}$,
we have
$$
e_{1}^{*}\left(\frac{x_1}{\lambda}\right)=\frac{1}{1-2\delta}e_{1}^{*}(e_1)+\frac{2}{1-2\delta}e_{1}^{*}(x_0)>\frac{1}{1-2\delta}-\frac{2}{1-2\delta}\cdot\frac{\epsilon}{2}
=\frac{1-\epsilon}{1-2\delta}>1.
$$
Since $\frac{x_1}{\lambda}\in D_1$, this contradicts to $\|e_1^{*}\|_{D_1}=1$. We can use the same argument for the case $\lambda>\frac{1}{2}+\epsilon$, which completes the claim. Then
$$
\left|\left|e_1-\frac{x_1}{\lambda}\right|\right|\leq 2\left|\left|\frac{1}{2}e_1-x_1\right|\right|+\left|\left|2x_1-\frac{x_1}{\lambda}\right|\right|\leq \frac{\epsilon}{M}+\|x_1\|\cdot\left|\frac{2\lambda-1}{\lambda}\right| \leq \frac{\epsilon}{M}+4\epsilon.
$$
Since
$$
1=\|S(x_1,x_2)\|=\left|\left| \lambda S\left(\frac{x_1}{\lambda},0\right)+(1-\lambda)S\left(0,\frac{x_2}{1-\lambda}\right) \right|\right|,
$$
and since $\|S\|_{D}=1$, the inequalities $\left|\left| S\left(\frac{x_1}{\lambda},0\right)\right|\right| \leq 1$ and
$\left|\left| S\left(0,\frac{x_2}{1-\lambda}\right)\right|\right|\leq 1$ combined with the strict convexity of $Y$ yields that
$S\left(\frac{x_1}{\lambda},0\right)=S\left(0,\frac{x_2}{1-\lambda}\right)$.
Moreover
\[ \begin{array}{lcl}
\|y_1-y_2\|&=&\|T(e_1,0)-T(0,e_2)\|
\\[5pt]
&\leq& \|T(e_1,0)-S(e_1,0)\|+\left|\left| S(e_1,0)-S\left(\frac{x_1}{\lambda},0\right)\right|\right| \\[5pt]
&& + \left|\left|S\left(0,\frac{x_2}{1-\lambda}\right)-S(0,e_2)\right|\right|+\|S(0,e_2)-T(0,e_2)\|
\\[5pt]
&\leq& 2\|T-S\|+\|S\|\left(\left|\left| e_1-\frac{x_1}{\lambda} \right|\right| + \left|\left| e_2-\frac{x_2}{1-\lambda}\right|\right|\right)
\\[5pt]
&\leq& \epsilon\left(\frac{1}{M} + \left(2M+\frac{\epsilon}{2M}\right)\cdot 2(\frac{1}{M}+4) \right),
\end{array} \]
where the last inequality follows from $\|S-T\|<\frac{\epsilon}{2M}$ and $\|T\|<2M$. Therefore $Y$ is uniformly convex.
\end{proof}


\begin{thebibliography}{99}

\bibitem{Acosta-RACSAM}
\textsc{M.~D.~Acosta},
Denseness of norm attaining mappings, \emph{Rev. R. Acad. Cien. Serie A. Mat.} {\bf 100} (2006), 9--30.

\bibitem{AAGM2}
\textsc{M.~D.~Acosta, R.~M.~Aron, D.~Garc\'ia and M.~Maestre}, The Bishop-Phelps-Bollob\'as Theorem for
 operators, \emph{J. Funct. Anal.} {\bf 254} (2008),
  2780--2799.

\bibitem{ACKLM}
\textsc{R.~M.~Aron, Y.~S.~Choi,
 S.~K.~Kim, H.~J.~Lee and M.~Martin},
The Bishop-Phelp-Bollob\'as version of Lindenstrauss properties A and B, \emph{Trans. Amer. Math. Soc.} (to appear).

\bibitem{ADM}
\textsc{M.~D.~Acosta, D.~Garc\'ia and M.~Maestre}, A Multilinear Lindenstrauss theorem, \emph{J. Funct. Anal.} {\bf 235} (2006),
  122--136.


\bibitem{ACK}
\textsc{R.~M.~Aron, B.~Cascales and O.~Kozhushkina}, The Bishop-Phelps-Bollob\'as theorem and Asplund operators, \emph{Proc. Amer. Math. Soc}
{\bf 139} (2011), 3553--3560.

\bibitem{ABGM}
\textsc{M.~D.~Acosta, J.~Becerra-Guerrero,
 D.~Garc\'ia and M.~Maestre},
The Bishop-Phelps-Bollob\'as Theorem
 for bilinear forms, \emph{Trans. Amer. Math. Soc.} {\bf 365} (2013), 5911--5932.

\bibitem{ACGM-Adv}
\textsc{R.~M.~Aron, Y.~S.~Choi,  D.~Garc\'{\i}a and  M.~Maestre},
The Bishop-Phelps-Bollob\'{a}s Theorem for ${\mathcal{L}}(L_1(\mu), L_\infty[0,1])$, \emph{Adv. Math.} {\bf 228} (2011), 617--628.

\bibitem{BishopPhelps} \textsc{E.~Bishop and R.~R.~Phelps}, A proof that every Banach space is subreflexive, \emph{Bull. Amer. Math. Soc.} {\bf 67} (1961), 97-98.

\bibitem{Bollobas} \textsc{B.~Bollob\'as}, An extension to the theorem of Bishop and Phelps, \emph{Bull. London. Math. Soc.} {\bf 2} (1970), 181-182.

\bibitem{Bou} \textsc{J.~Bourgain}, Dentability and the Bishop-Phelps property, \emph{Israel J. Math.} {\bf 28} (1977), 265-271.

\bibitem{BR} \textsc{A. Br$\o$ndsted and R.T. Rockafellar}, On the subdifferentiability of convex functions, \emph{Proc. Amer. Math. Soc.} {\bf 16} (1965), 605-611.

\bibitem{ChoiKim}
\textsc{Y.~S.~Choi, S.~K.~Kim}, The Bishop-Phelps-Bollob\'as theorem for operators from $L_{1}(\mu)$
into a Banach space with the Radon-Nykod\'ym property, \emph{J. Funct. Anal.} \textbf{261} (2011), 1446--1456.

\bibitem{FHHMZ} \textsc{M.~Fabian, P.~Habala, P.~H\'ajek, V.~Montesinos and V.~Zizler}, Banach space theory. The basis
for linear and nonlinear analysis, \emph{Springer}, 2011, MR2766381.

\bibitem{DLM}
\textsc{D.~Garc\'ia, H.~J.~Lee and M.~Maestre}, The Bishop-Phelps-Bollob\'as property for hermitian forms on hilbert spaces, (to appear).

\bibitem{eke}
\textsc{I.~Ekeland}, On the variational principle, \emph{J. Math. Anal. Appl.} {\bf 47} (1974), 324--353.

\bibitem{Iwanik}
\textsc{A.~Iwanik}, Norm attaining operators on Lebesgue spaces, \emph{Pacific J. Math.} \textbf{83} (1979), 381--386.

\bibitem{Kim-c_0} \textsc{S.~K.~Kim}, The Bishop-Phelps-Bollob\'as Theorem for operators from $c_0$ to uniformly convex spaces, \emph{preprint}.

\bibitem{KimLee} \textsc{S.~K.~Kim and H.~J.~Lee}, Uniform convexity and Bishop-Phelps-Bollob\'{a}s property, \emph{Canadian J. Math.} (to appear).

\bibitem{Lindens} \textsc{J.~Lindenstrauss}, On operators which attain their norm, \emph{Israel J. Math.} {\bf 1} (1963), 139-148.

\bibitem{Lomono} \textsc{V.~Lomonosov}, A counter example to the Bishop-Phelps theorem in complex spaces, \emph{Israel J. Math.} {\bf 115} (2000), 25-28.

\bibitem{Partington} \textsc{J.~R.~Partington}, Norm attaining operators, \emph{Israel J. Math.} \textbf{43} (1982), 273--276.

\bibitem{PaySal} \textsc{R.~Pay\'a and Y.~Saleh}, Norm attaining operators from $L_1(\mu)$ into $L_\infty(\nu)$, \emph{Arch. Math.} \textbf{75} (2000) 380--388.


\bibitem{Stegall2} \textsc{Ch.~Stegall}, Optimization of funtions on certain subsets of Banach spaces, \emph{Math. Annal.} {\bf 236} (1978), 171-176.


\end{thebibliography}
\end{document}